\documentclass[11pt]{article}
\usepackage[english]{babel}
\usepackage[applemac]{inputenc}
\usepackage{epsf}
\usepackage{color}
\usepackage[dvips]{graphicx}
\graphicspath{\\immagini}
\usepackage{amsmath,amsfonts,amssymb,amsthm}
\usepackage{placeins}
\newtheorem{theorem}{Theorem}

\newtheorem{proposition}{Proposition}

\newtheorem{corollary}{Corollary}

\begin{document}

\title{On the degree sequences of uniform hypergraphs}

\author{A.~Frosini~\thanks{Universit\`a
di Firenze, Dipartimento di Sistemi e Informatica, Viale Morgagni
65, 50134 Firenze, Italy}\and C.
Picouleau~\thanks{CEDRIC, CNAM, 292 rue St-Martin 75141, Paris cedex
03, France}
\and S.~Rinaldi~\thanks{Universit\`a di
Siena, Dipartimento di  Ingegneria dell'informazione e scienze
matematiche, Pian dei Mantellini 44, 53100 Siena, Italy}}
\maketitle

\begin{abstract}

In hypergraph theory, determining a characterization  of the degree sequence $d=(d_1,d_2,\ldots,d_n)$ where $d_1\ge d_2\ge\ldots,d_n$ are positive integers,
of an $h$-uniform simple hypergraph $\cal H$, and
deciding the complexity status of the reconstruction  of $\cal H$
from $d$,  are  two challenging open problems. They can be
formulated in the context of discrete tomography: asks whether
there is a matrix $A$ with positive projection vectors
$H=(h,h,\ldots,h)$ and $V=(d_1,d_2,\ldots,d_n)$ with distinct
rows.

In this paper we consider the two subcases where the vector 
$V$ is an homogeneous vector, and where $V$ is almost  homogeneous, i.e., $d_1-d_n=1$. We give a simple  characterization for these two subcases, and we show how to solve the related
 reconstruction problems in polynomial time. To
reach our goal, we use the concepts of Lyndon words and necklaces
of fixed density, and we apply some already known algorithms for
their efficient generation.

keywords: Discrete Tomography, Reconstruction problem, Lyndon
word, Necklace, Hypergraph degree sequence, Regular bipartite graph.
\end{abstract}

\section{Introduction}\label{intro}
The degree sequence, also called graphic sequence, of a simple
graph (a graph without loop or parallel edges) is the list of
vertex degrees, usually written in nonincreasing order, as $d=(
d_1,d_2,\ldots,d_n),d_1\ge d_2\ge\cdots\ge d_n$. The problem of
characterizing the graphic sequences of graphs was solved by Erd\"
os and Gallai (see \cite{Berge}):

\begin{theorem} {\bf (Erd\" os, Gallai)}
A sequence $d=(d_1,d_2,\ldots,d_n)$ where $d_1\ge d_2\ge\cdots\ge
d_n$ is graphic if and only if $\Sigma_{i=1}^nd_i$ is even and
$$\Sigma_{i=1}^k d_i\le k(k-1)+\Sigma_{i=k+1}^n\min\{k,d_i\}, 1\le k\le n. $$
\end{theorem}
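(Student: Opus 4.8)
The plan is to prove the two implications separately: necessity by a short double-counting argument, and sufficiency---the substantial half---by induction on the number of edges.

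For \emph{necessity}, suppose the sequence is realized by a simple graph $G$ on vertices $v_1,\dots,v_n$ with $\deg(v_i)=d_i$. Then $\sum_{i=1}^n d_i=2|E(G)|$ is even. Fixing $k$, set $S=\{v_1,\dots,v_k\}$ and count the endpoints of edges meeting $S$: the at most $\binom{k}{2}$ edges inside $S$ contribute $2$ each, hence at most $k(k-1)$ altogether, while each vertex $v_i$ with $i>k$ is joined to $S$ by at most $\min\{k,d_i\}$ edges (at most $k$ since $|S|=k$, and at most $d_i$ since $\deg(v_i)=d_i$). Summing gives $\sum_{i=1}^k d_i\le k(k-1)+\sum_{i=k+1}^n\min\{k,d_i\}$.

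For \emph{sufficiency}, I would induct on $m=\tfrac12\sum_{i=1}^n d_i$. If $m=0$ the empty graph works. Otherwise, since deleting trailing zeros affects none of the remaining inequalities, we may assume $d_n\ge 1$. The heart of the argument is to reduce to a strictly smaller instance: choose two coordinates, subtract $1$ from each, re-sort to obtain a sequence $d'$ with $\sum d'_i=2(m-1)$, check that $d'$ again satisfies the Erd\H{o}s--Gallai inequalities, apply the induction hypothesis to get a realization $G'$ of $d'$, and finally add one edge to $G'$ between the two chosen vertices (after the obvious relabeling) to obtain a realization of $d$. A standard choice, following Choudum's inductive proof, is to decrement $d_n$ together with $d_t$, where $t$ is the least index with $d_t>d_{t+1}$ (with the convention $d_{n+1}:=0$), so that $d_1=\dots=d_t$; the all-equal case, where one instead decrements the last two coordinates, is handled similarly.

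Two things then need checking, and the second is where essentially all the work lies. First, the two chosen vertices may already be adjacent in $G'$; one removes this obstruction by a single $2$-switch---replacing a pair of independent edges $xy,\,zw$ of $G'$ by $xz,\,yw$, which preserves every degree and creates the required non-edge---and such a pair can always be located from the degree bounds (and if one of the two decremented degrees drops to $0$, that vertex is isolated in $G'$ and no switch is needed). Second, one must verify that $d'$ inherits all the Erd\H{o}s--Gallai inequalities from $d$: this is an elementary but somewhat lengthy case analysis on the position of $k$ relative to $t$, in which the slack produced by the strict inequality $d_t>d_{t+1}$ (together with $d_n\ge 1$) absorbs the two unit decrements. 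I expect this last verification to be the main obstacle; the rest is bookkeeping.
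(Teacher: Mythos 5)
First, a point of reference: the paper does not prove this statement at all --- it quotes the Erd\H{o}s--Gallai theorem as a known classical result (with a pointer to \cite{Berge}) and uses it only as background, so there is no in-paper proof to compare your route against. Your proposal therefore has to stand on its own. The necessity half does: the parity observation and the double count of edge-endpoints meeting $S=\{v_1,\dots,v_k\}$ (at most $k(k-1)$ from edges inside $S$, at most $\min\{k,d_i\}$ from each outside vertex) is complete and correct.

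The sufficiency half, however, is an outline of Choudum's induction rather than a proof, and the two steps you defer are exactly the ones that carry the theorem. (1) You state that verifying the Erd\H{o}s--Gallai inequalities for the decremented sequence $d'$ is ``where essentially all the work lies'' and do not carry out the case analysis; until that is done, nothing has been proved. (2) The adjacency repair is asserted, not argued: you claim a suitable pair of independent edges for the $2$-switch ``can always be located from the degree bounds,'' but this is not a routine detail. For an arbitrary graphic sequence and an arbitrary prescribed pair of vertices, a realization avoiding that edge need not exist at all --- for $d'=(2,2,1,1)$ every realization has $v_1\sim v_2$, and for $d'=(1,1)$ the unique realization has its two vertices adjacent --- so any correct argument must exploit the specific structure of your reduction (that $v_n$ has smallest degree, that $t$ is minimal with $d_t>d_{t+1}$, and that the original $d$ satisfies the inequalities) to rule out the scenario in which every edge disjoint from $\{v_t,v_n\}$ has an endpoint in $N(v_t)$ or $N(v_n)$ in every realization of $d'$. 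Nothing in the proposal supplies that argument. So the plan is the standard and viable one, but as written the sufficiency direction has genuine gaps at both of its critical junctures.
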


An hypergraph ${\cal H}=(Vert,{\cal E})$ is defined as follows (see
\cite{Bergehyper}):  $Vert=\{v_1,\ldots,v_n\}$ is  a ground set of
vertices and ${\cal E}\subset 2^{\vert Vert
\vert}\setminus\emptyset$ is the set of hyperedges such that $e\not
\subset e'$ for any pair $e,e'$ of $\cal E$. The degree of a vertex
$v\in Vert$ is the number of hyperedges $e\in \cal E$ such that
$v\in e$.  An hypergraph ${\cal H}=(Vert,{\cal E})$ is $h$-uniform
if $\vert e\vert=h$ for all hyperedge $e\in \cal E$. Moreover ${\cal
H}=(Vert,{\cal E})$ has no parallel hyperedges, i.e., $e\ne e'$ for
any pair $e,e'$ of hyperedges. Thus a simple graph (loopless and
without parallel edges) is a $2$-uniform hypergraph.


The problem of the characterization of the degree sequences of
$h$-uniform hypergraphs is one of the most relevant among the
unsolved problems in the theory of hypergraphs \cite{Bergehyper}
even for the case of $3$-uniform hypergraphs. For its last case Kocay and Li show that any two $3$-uniform hypergraphs with the same degree sequence can be transformed into each other using a sequence of trades \cite{kocay}. Furthermore the complexity
status of the reconstruction problem is still open.

This problem has been related to a class of problems that are of
great relevance in the field of discrete tomography. More
precisely the aim of discrete tomography is the retrieval of
geometrical information about a physical structure, regarded as a
finite set of points in the integer lattice, from measurements,
generically known as projections, of the number of atoms in the
structure that lie on lines with fixed scopes. A common
simplification is to represent a finite physical structure as a
binary matrix, where an entry is $1$ or $0$ according to the
presence or absence of an atom in the structure at the
corresponding point of the lattice. One of the challenging
problems in the field is then to reconstruct the structure, or, at
least, to detect some of its geometrical properties from a small
number of projections. One can refer to the books of G.T. Herman
and A. Kuba \cite{Herman,Herman2} for further information on the
theory, algorithms and applications of this classical problem in
discrete tomography.

Here we recall the seminal result in the field of the discrete
tomography due to Ryser~\cite{ryser}. Let
$H=(h_1,\ldots,h_m),h_1\ge h_2\ge\cdots\ge h_m,$ and
$V=(v_1,\ldots,v_n),v_1\ge v_2\ge\cdots\ge v_n,$ be two
nonnegative integral vectors, and ${\cal U}(H,V)$ be the class of
binary matrices $A=(a_{ij})$ satisfying
\begin{eqnarray}
\Sigma_{j=1}^na_{ij}=&h_i&1\le i\le m\\
\Sigma_{i=1}^ma_{ij}=&v_j&1\le j\le n
\end{eqnarray}
In this context  $H$ and $V$ are called the row, respectively
column, projection  of  $A$, as depicted in Fig.~\ref{set-matrix}.
Denoting by ${\bar V}=({\bar v_1},{\bar v_2},\ldots)$ the conjugate
sequence, also called the Ferrer sequence, of $V$ where $\bar
v_i=\vert\{v_j: v_j\in V, v_j\ge i\}\vert$. Ryser gave the
following \cite{ryser}:

\begin{theorem} {\bf (Ryser)} ${\cal U}(H,V)$ is nonempty if and only if
\begin{eqnarray}
\Sigma_{i=1}^mh_i&=&\Sigma_{i=1}^nv_i\\
\Sigma_{j=1}^ih_j&\ge&\Sigma_{j=1}^i{\bar v_j}\hskip5mm \forall
i\in\{1,\ldots,m\}
\end{eqnarray}
\end{theorem}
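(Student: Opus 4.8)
We may assume (3), since each of its sides counts the $1$'s of any matrix of $\mathcal{U}(H,V)$. The inequalities (4) read $\sum_{j=1}^{i}h_j\le\sum_{j=1}^{i}\bar v_j$, and for their necessity fix $i\in\{1,\dots,m\}$ and $A=(a_{rj})\in\mathcal{U}(H,V)$: in the $i\times n$ submatrix formed by the first $i$ rows of $A$, column $j$ has only $i$ entries and lies inside column $j$ of $A$, so it contains at most $\min(i,v_j)$ ones; summing the $i$ row sums,
\[
\sum_{j=1}^{i}h_j=\sum_{r=1}^{i}\sum_{j=1}^{n}a_{rj}\le\sum_{j=1}^{n}\min(i,v_j)=\sum_{k=1}^{i}\bar v_k,
\]
where the last equality is the identity $\sum_{k=1}^{i}\bar v_k=\sum_{j}\bigl|\{k\le i:v_j\ge k\}\bigr|=\sum_{j}\min(i,v_j)$. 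The first $i$ rows are the extremal choice precisely because $h_1\ge\cdots\ge h_m$.

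For sufficiency we build an element of $\mathcal{U}(H,V)$ explicitly. Write $N=\sum_ih_i=\sum_jv_j$. Taking $i=m$ in (4) and using $\sum_{k\ge1}\bar v_k=\sum_jv_j=N$ forces $\sum_{k=1}^{m}\bar v_k=N$, hence $\bar v_k=0$ for $k>m$, i.e.\ $v_j\le m$ for every $j$. Let $B$ be the $m\times n$ binary matrix whose $j$-th column is $v_j$ ones on top of $m-v_j$ zeros. Its column sums are $V$, and its $i$-th row sum is $|\{j:v_j\ge i\}|=\bar v_i$, so its row-sum vector is $\bar V$ (padded with zeros to length $m$), which is already non-increasing. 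We will only ever apply moves that keep the column sums equal to $V$, so the goal is to deform the row-sum vector down to $H$.

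The move picks a column $j$ and two rows $s,t$ with $b_{sj}=1$, $b_{tj}=0$ and swaps these two entries; this preserves all column sums, decreases the row sum of $s$ by one and increases that of $t$ by one. We iterate, after each move relabelling the rows so that the current row-sum vector $r$ is non-increasing, and maintaining the invariant that $H$ is dominated by $r$, with equal total sums. This holds initially, since $r=\bar V$ and (4) is exactly the statement $\sum_{i\le k}h_i\le\sum_{i\le k}\bar v_i$ for all $k\le m$ (with $\sum_ih_i=\sum_i\bar v_i=N$). Suppose $r\neq H$; let $s$ be the least index with $r_s\neq h_s$ (so $r_s>h_s$ by domination) and, using $\sum_ir_i=\sum_ih_i$, let $t>s$ be the least index with $r_t<h_t$. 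Since $s<t$ and both vectors are non-increasing, $r_s>h_s\ge h_t>r_t$, so $r_s>r_t$, and therefore some column $j$ carries a $1$ in row $s$ and a $0$ in row $t$ (otherwise the support of row $s$ would sit inside that of row $t$, forcing $r_s\le r_t$). Apply the move to $(s,t,j)$. Only the prefix sums $\sum_{i\le k}r_i$ with $s\le k<t$ change, each dropping by one; but for such $k$ the bounds $r_s>h_s$ and $r_i\ge h_i$ ($s<i<t$) already give $\sum_{i\le k}r_i\ge\sum_{i\le k}h_i+1$, so domination survives (and the subsequent re-sorting only raises prefix sums). Finally $\sum_i(r_i-h_i)^2$ strictly decreases, by at least $2$, at each move — a direct computation for the move itself, while re-sorting $r$ against the fixed non-increasing $h$ can only decrease it further by the rearrangement inequality. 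As this is a non-negative integer bounded by $2N$, the process halts, necessarily at $r=H$, and the resulting matrix lies in $\mathcal{U}(H,V)$.

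The crux is the smoothing step just described: everything else is routine. I expect the only real point to be the verification that ``$H$ dominated by $r$'' is preserved by one move, which rests on choosing $s$ as the first disagreement and $t$ as the first subsequent deficit, together with the elementary remark that $r_s>r_t$ always yields a usable column. Once that is settled the construction is completely explicit and, since each move strictly decreases a bounded integer potential, it runs in polynomial time — the feature relied upon by the reconstruction algorithms in the sequel.
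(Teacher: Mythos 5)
The paper does not prove this statement at all: it is quoted as Ryser's classical theorem with a citation to \cite{ryser}, so there is no in-paper argument to compare against. Your proposal supplies a complete and correct proof, and it is essentially the classical Gale--Ryser argument: necessity by counting the ones of the top $i$ rows columnwise, via the identity $\sum_{k=1}^{i}\bar v_k=\sum_{j}\min(i,v_j)$; sufficiency by starting from the canonical matrix whose row-sum vector is $\bar V$ and repeatedly switching a $1$ and a $0$ within a column, which preserves the column sums, preserves the dominance invariant (your choice of $s$ as the first disagreement and $t$ as the first subsequent deficit, plus the observation $r_s>r_t$ guaranteeing a usable column, are exactly the standard devices), and strictly decreases the integer potential $\sum_i(r_i-h_i)^2$, so the process terminates at row sums $H$ in polynomially many steps. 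Your derivation of $v_j\le m$ from (3) and the $i=m$ case of the dominance condition is a correct way to see that the canonical starting matrix is well defined. One point worth flagging explicitly: the inequality (4) as printed in the paper has the sign reversed; the correct condition, and the one you actually prove, is $\sum_{j=1}^{i}h_j\le\sum_{j=1}^{i}\bar v_j$ for all $i$ (e.g.\ $H=V=(1,1)$, realized by the identity matrix, violates the printed $\ge$ since $\bar v_1=2$), so your silent rewriting of (4) is the right reading and the paper's statement should be regarded as containing a typo.
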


Moreover this characterization, and the reconstruction of $A$ from
its two projections $H$ and $V$, can be done in polynomial time (see
\cite{Herman}). Some applications in discrete tomography requiring
additional constraints can be found in
\cite{balogh,bro2col,batenburg,gardner,irving,vardi,prause,shlif}.

As shown in \cite{Berge} this problem is equivalent to the
reconstruction of a bipartite graph $G=(H,V,E)$ from its degree
sequences $H=(h_1,\ldots,h_m)$ and $V=(v_1,\ldots,v_n)$. Numerous
papers give some generalizations of this problem for the graphs
with colored edges (see \cite{pic,fixrows,3colnp,gale,guinrz}).

So, in this context, the problem of the characterization of the
degree sequence $(d_1,d_2,\ldots,d_n)$ of an $h$-uniform hypergraph
$\cal H$ (without parallel edges) asks whether there is a binary
matrix $A\in {\cal U}(H,V)$ with nonnegative projection vectors
$H=(h,h,\ldots,h)$ and $V=(d_1,d_2,\ldots,d_n)$ with distinct rows,
i.e., $A$ is the incidence matrix of $\cal H$ where rows and columns
correspond to hyperedges and vertices, respectively. To our
knowledge the problem of the reconstruction of a binary matrix with
distinct rows has not been studied in discrete tomography.

In this paper, we carry on our analysis in the special case where
the $h$-uniform hypergraph to reconstruct is also $d$-regular,
i.e., each vertex $v$ has the same degree $d$, in other words the
vector of the vertical projection is homogeneous, i.e.,
$V=(d,\ldots,d)$. We also study the problem where the $h$-uniform
hypergraph to reconstruct is almost $d$-regular, i.e.,
$V=(d,\ldots,d,d-1,\ldots,d-1)$, in other words the hypergraph has
span one.

We focus both on the decision problem, and on the related
reconstruction problem, i.e., the problem of determining the
existence of an element of ${\cal U}(H,V)$ consistent with $H$ and
$V$, and in affirmative case, how to quickly reconstruct it. To
accomplish these tasks, we will design an algorithm that runs in
polynomial time with respect to the dimensions $m$ and $n$ of the
matrix to reconstruct. The algorithm relies on the concepts of
Lyndon words and necklaces of fixed density, and uses an already
known algorithm for their efficient generation.

\section{Definitions and introduction of the problems}

Let $A$ be a binary matrix having $m$ rows and $n$ columns, and
let us consider the two integer vectors $H=(h_1,\dots ,h_m)$ and
$V=(v_1,\dots ,v_n)$ of its horizontal and vertical projections,
respectively, as defined in Section~\ref{intro} (see
Fig.~\ref{set-matrix}).

\begin{figure}
\begin{center}
\includegraphics[width = 4.5cm]{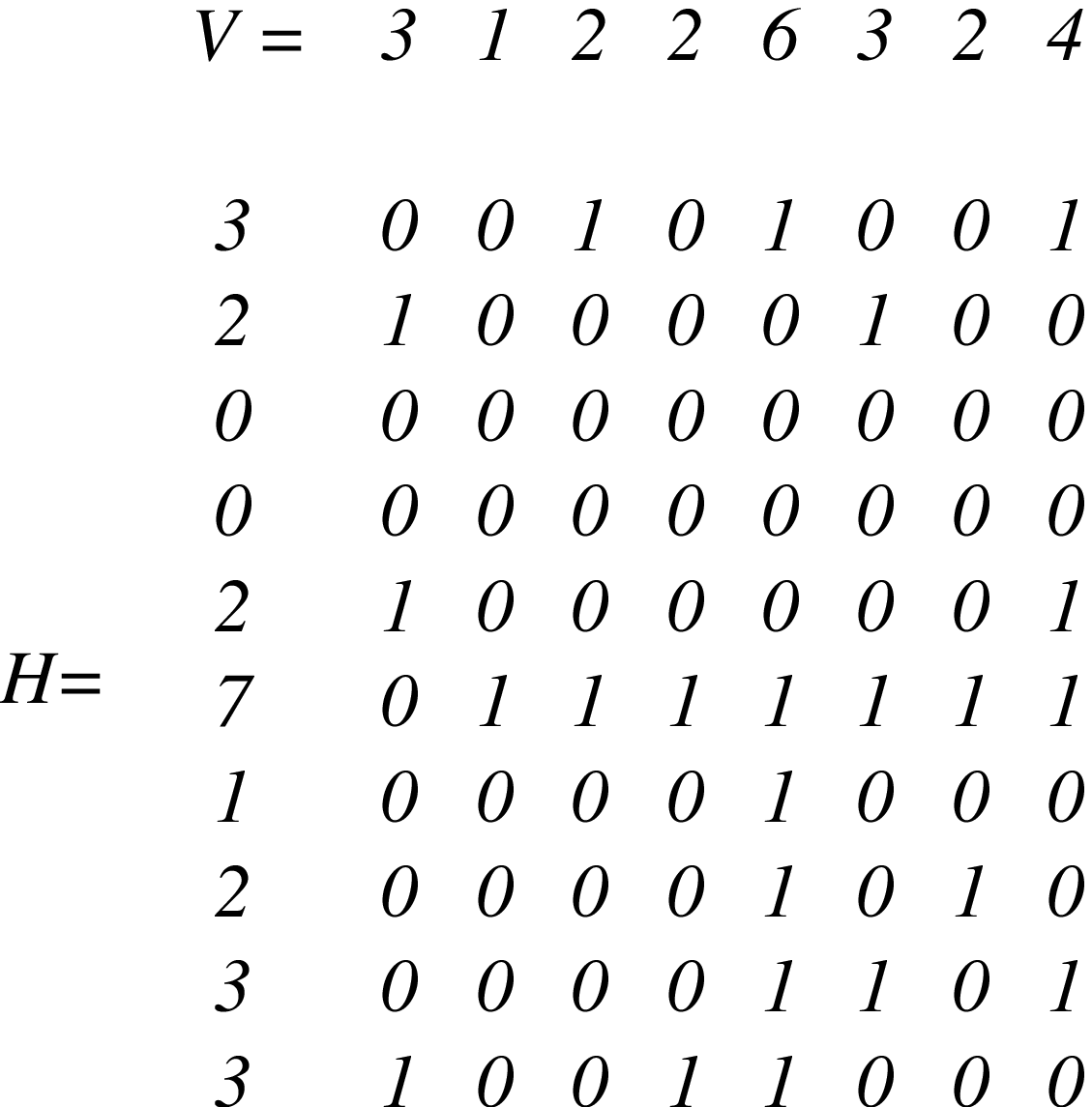}
\end{center}
\caption{A binary matrix, used in discrete tomography to represent
finite discrete sets, and its vectors $H$ and $V$ of horizontal
and vertical projections, respectively.} \label{set-matrix}
\end{figure}

In this paper we will consider some specialized versions of the
following general problems:\\

{\sc \bf Consistency} $(H,V,\mathcal{C})$
\begin{description}
\item{\bf Input:} two integer vectors $H$ and $V$, and a class of
discrete sets $\mathcal{C}$.

\item{\bf Question:} does there exist an element of $\mathcal{C}$
whose horizontal and vertical projections are $H$ and $V$,
respectively?
\end{description}

\medskip

{\sc \bf Reconstruction} $(H,V,\mathcal{C})$
\begin{description}
\item{\bf Input:} two integer vectors $H$ and $V$, and a class of
discrete sets.

\item{\bf Task:} reconstruct a matrix $A \in \mathcal{C}$ whose
horizontal and vertical projections are $H$ and $V$, respectively,
if it exists, otherwise give failure.
\end{description}

In \cite{rys}, Ryser gave a characterization of the instances of
$Consistency (H,V,\mathcal{C})$, with $\mathcal{C}$ being the class
of the binary matrices, that admit a positive answer. He moved from
the following trivial conditions that are necessary for the
existence of a matrix consistent with two generic vectors $H$ and
$V$ of projections:

\begin{description}
\item{{\bf Condition $1$}:} for each $1\leq i \leq m$ and $1\leq j \leq
n$, it holds $h_i \leq n$ and $v_j\leq m$;
 \item{{\bf Condition $2$}:}
$\Sigma_{i=1}^m h_i = \Sigma_{j=1}^n v_j,$
\end{description}

and then he added a third one to obtain the characterization, as
recalled in the Introduction.

The authors of \cite{bro}, pointed out that these two conditions are
also sufficient in case of homogeneous horizontal and vertical
projections, by showing their maximality w.r.t. the cardinality of
the related sets of solutions.

Ryser defined a well known greedy algorithm to solve
$Reconstruction(H,V,\mathcal{C})$ that does not compare the
obtained rows, and does not admit an easy generalization to
perform this further task.

In the sequel, we are going to consider the class of binary matrices
having no equal rows and homogeneous horizontal projections, due to
its connections, as mentioned in the Introduction, with the
characterization of the degree sequences of $h$-uniform hypergraphs.
Among them, we restrict our analysis to those matrices 
that are also, first, $d$-regular, i.e., whose vertical
projections are also homogeneous: $H=(h,\ldots,h)$ and $V=(v,\ldots,v)$; we denote this class by
$\mathcal{E}$; and, second, almost $d$-regular, i.e., whose vertical
projections are also almost homogeneous: $H=(h,\ldots,h)$ and $V=(v,\ldots,v,v-1,\ldots,v-1)$; we denote this class by
$\mathcal{E}_1$.\\

Now, we state a third necessary condition for answering to
$Consistency(H,V,\mathcal{E})$ (and also to $Consistency(H,V,\mathcal{E}_1)$ as we will see in Section \ref{spanone}):

\begin{description}
\item{{\bf Condition $3$}:} If $Consistency(H,V,\mathcal{E})$ has
positive answer, then  $$v\leq h/n \cdot{{n}\choose{h}}.$$
\end{description}

Condition $3$ can be rephrased, in our setting, as follows: there
does not exist a matrix having $H=(h,\ldots,h)$ and $V=(v,\ldots,v)$ as homogeneous
projections, and more than ${n}\choose{h}$ different rows; otherwise at least two rows will be identical. We will
prove that the three conditions 1,2, and 3 are also sufficient to solve (in
linear time) the problem $Consistency(H, V, \mathcal{E})$.

To this aim, we use an approach different from those standardly used
in Discrete Tomography: we consider each row of a matrix in
$\mathcal{E}$ as a binary word, and we group them into equivalence
classes according to their cyclic shifts, as defined in the next
section.

\section{The problem $Consistency(H,V,\mathcal{E})$}

Let us consider each row of a binary matrix as a binary finite word
$u=u_1 \: u_2 \: \dots \: u_n$, whose length $n$ is the number of
columns of the matrix, and whose number $h$ of $1$-elements is the
value of the horizontal projection.

We note that applying a cyclic shift to the word $u$, denoted by
$s(u)$, we obtain a different word $s(u)=u_2 \: u_3 \: \dots \: u_n
\: u_1$, unless the cases $u=(1)^n$ or $u=(0)^n$, of the same
length, and having the same number of $1$-elements inside. Iterating
the shift of a word $u$, we obtain a sequence of different words
that row wise arranged as a matrix, belong to $\mathcal{E}$. We
indicate with $s^k(u)$, where $k\geq 0$, the application of $k$
times the shift operator to the word $u$.

Unfortunately the words repeat after at most $n$ shifts, and
consequently the vertical projections of the obtained matrix are
upper bounded by $n$, so, in general, only a submatrix of a solution
of $Reconstruction(H,V,\mathcal{E})$ is achieved (see Fig.
\ref{matrix}).
The following trivial result holds:

\begin{proposition}\label{prop1}
Let $u$ be a binary word of length $n$ having $h\leq n$ $1$-elements
inside. Let us consider the $n \times n$ matrix $A$ obtained by row
wise arranging the $n$ cyclic shifts of $u$. Then, $A$ has the
horizontal and vertical projections equal to $h$.
\end{proposition}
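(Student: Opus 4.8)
The plan is to exhibit the matrix $A$ explicitly and compute both sets of projections by a direct counting argument. First I would fix notation: write $u = u_1 u_2 \cdots u_n$ and let row $k$ of $A$ (for $k = 0, 1, \ldots, n-1$) be $s^k(u)$, so that the entry $a_{k,j}$ equals $u_{((j+k-1) \bmod n) + 1}$, i.e.\ the letter of $u$ in position $j+k$ read cyclically. With this description every row is a cyclic shift of $u$, so it has exactly $h$ ones by hypothesis; this immediately gives that all $n$ horizontal projections equal $h$. That half is essentially a one-line observation once the indexing is set up.

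For the vertical projections, I would fix a column index $j$ and count how many rows $k \in \{0, \ldots, n-1\}$ satisfy $a_{k,j} = 1$. By the formula above this is the number of $k$ for which $u_{((j+k-1)\bmod n)+1} = 1$. As $k$ ranges over a complete residue system modulo $n$, the index $(j+k-1) \bmod n$ also ranges over a complete residue system modulo $n$; hence the count is exactly the total number of positions of $u$ that carry a $1$, which is $h$. Therefore the $j$-th vertical projection equals $h$ as well, for every $j$, and since $j$ was arbitrary all vertical projections equal $h$.

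Finally I would just assemble these two computations into the statement: $A$ is an $n \times n$ binary matrix, its horizontal projection vector is $(h, h, \ldots, h)$ and its vertical projection vector is $(h, h, \ldots, h)$, which is what was claimed. I do not expect any real obstacle here — the only thing to be careful about is the modular bookkeeping in the indices, i.e.\ making sure the bijection ``$k \mapsto (j+k-1) \bmod n$ is a permutation of $\mathbb{Z}/n\mathbb{Z}$'' is stated cleanly so that the column sum genuinely collapses to the number of ones in $u$. (One could alternatively phrase the whole argument combinatorially: each of the $h$ ones of $u$ contributes exactly one $1$ to each column as it is cycled through all $n$ shifts, and contributes exactly one $1$ to each row trivially; summing gives $h$ in every row and every column.) This matches the paper's own description of the statement as ``trivial.''
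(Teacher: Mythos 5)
Your proof is correct and follows exactly the argument the paper has in mind when it calls this result trivial (and accordingly omits a proof): each row is a shift of $u$ and so sums to $h$, while for each column the map $k \mapsto (j+k-1) \bmod n$ permutes the positions of $u$, so each column sum is also $h$. Nothing further is needed.
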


As already noticed, the rows of the matrix $A$ may not all be
different. Throughout the paper we will denote by $M(u)$ the matrix
obtained by row wise arranging all the different cyclic shifts of a
word $u$. To establish how many different rows can be obtained by
shifting a given binary word, we need to recall the definitions and
main properties of necklaces and Lyndon words.

\medskip

Following the notation in~\cite{rus}, a {\em binary necklace}
(briefly {\em necklace}) is an equivalence class of binary words
under cyclic shift. We identify a necklace with the
lexicographically least representative $u$ in its equivalence class,
denoted by $[u]$. The set of all (the words representative of) the
necklaces with length $n$ is denoted $N(n)$. For example, $$N(4) =
\{ 0000, 0001, 0011, 0101, 0111, 1111 \} \, .
$$ An important class of necklaces are those that are aperiodic.
An aperiodic (i.e. period $\geq n$) necklace is called a {\em
Lyndon word}. Let $L(n)$ denote the set of all Lyndon words with
length $n$. For example, $L(4) = \{0001, 0011, 0111\}$.

We denote fixed-density necklaces, and Lyndon words in a similar
manner by adding the parameter $d$ to represent the number of
$1$-elements in the words. We refer to the number $d$ as the density
of the word. Thus the set of necklaces with density $d$ is
represented by $N(n, d)$, and the set of Lyndon words with density
$d$ is represented by $L(n, d)$. For example, $N(4,2) = \{ 0011,
0101 \}$, and $L(4,2)=\{ 0011 \}$.

It is known from Gilbert and Riordan \cite{gil} that the number of
fixed density necklaces and Lyndon words is
$$ N(n,d)=\frac{1}{n} \sum_{j\backslash \gcd({n,d})} \phi (j) {{n/j} \choose {d / j}} \,
, \qquad L(n,d)=\frac{1}{n} \sum_{j\backslash \gcd({n,d})} \mu (j)
{{n/j} \choose {d / j}} \, $$ respectively, where the symbols
$\phi$ and $\mu$ refer to the Euler and M\"obius functions.

Now we enlighten the connection between these objects and our
problem, refining Proposition \ref{prop1}:

\begin{proposition}
If $u$ is a word of length $n$ and density $h\leq n$, then the
cardinality of $[u]$ (i.e. the number of rows of $M(u)$) is a
divisor of $n$.
\end{proposition}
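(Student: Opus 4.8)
The plan is to use elementary group theory on the cyclic shift operator. First I would observe that the cyclic shift $s$ generates a cyclic group acting on the set of binary words of length $n$; concretely, $s^n$ is the identity map on any word of length $n$, since shifting $n$ times returns every letter to its original position. The equivalence class $[u]$ is precisely the orbit of $u$ under this action, and $M(u)$ is obtained by listing the distinct elements of that orbit, so $|[u]|$ equals the size of the orbit of $u$.

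Next I would invoke the orbit--stabilizer principle (or argue it directly in this concrete setting): let $k$ be the least positive integer with $s^k(u)=u$. Such a $k$ exists and satisfies $k\le n$ because $s^n(u)=u$. I claim the orbit of $u$ is exactly $\{u, s(u), \dots, s^{k-1}(u)\}$ and these are pairwise distinct. Distinctness follows because if $s^i(u)=s^j(u)$ with $0\le i<j<k$ then $s^{j-i}(u)=u$ with $0<j-i<k$, contradicting minimality of $k$. That the orbit contains nothing more follows because for any integer $r$, writing $r = qk + t$ with $0\le t<k$ via division, we get $s^r(u)=s^t(u)$. Hence $|[u]|=k$.

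Finally I would show $k$ divides $n$. Since $s^n(u)=u$, write $n=qk+t$ with $0\le t<k$; then $u=s^n(u)=s^t(u)$, and minimality of $k$ forces $t=0$, i.e. $k\mid n$. This completes the argument. There is no real obstacle here — the only mild subtlety is to state cleanly that $s^n$ fixes every word of length $n$, which is immediate from the definition of the cyclic shift, and to be careful that the two boundary words $u=(0)^n$ and $u=(1)^n$ (where $k=1$) are covered, which they are since $1\mid n$.

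A stylistic remark: one could equivalently phrase this by noting that $k$ is the period of the word $u$ viewed cyclically, and the statement becomes "the cyclic period divides the length", which is the standard fact underlying the definitions of necklaces and Lyndon words recalled just above. I would keep the self-contained division-with-remainder argument since it makes the bound $|[u]|\le n$ transparent and directly supports the later discussion of the matrices $M(u)$.
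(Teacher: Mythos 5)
Your proof is correct: the paper states this proposition without any proof (treating it as immediate), and your division-with-remainder argument --- that the least $k$ with $s^k(u)=u$ equals $|[u]|$ and must divide $n$ because $s^n(u)=u$ --- is exactly the standard justification one would supply. No gaps; the handling of the constant words $(0)^n$ and $(1)^n$ is fine since $k=1$ divides $n$.
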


As a consequence, we have:

\begin{corollary}
If $u$ is a Lyndon word of length $n$ and density $h$, then the
cardinality of $[u]$, i.e. the number of rows of $M(u)$, is equal to
$n$, and the vertical projections of $M(u)$ are all equal to $h$.
\end{corollary}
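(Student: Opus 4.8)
The plan is to derive the corollary directly from the preceding proposition together with the definition of a Lyndon word. First I would recall that the proposition tells us that if $u$ has length $n$ and density $h$, then the cardinality of the equivalence class $[u]$ — equivalently, the number of distinct cyclic shifts of $u$, which is exactly the number of rows of $M(u)$ — is a divisor of $n$. So it remains only to exclude every proper divisor and conclude that this cardinality equals $n$ itself.

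To do this, I would argue by contraposition through the notion of period. If the number of distinct cyclic shifts of $u$ were some proper divisor $k$ of $n$, with $k < n$, then $s^k(u) = u$, i.e. $u$ is invariant under the shift by $k$ positions. A standard and elementary fact about words is that $s^k(u) = u$ forces $u$ to be periodic with period dividing $k$; writing $u = w^{n/k}$ for the block $w = u_1 u_2 \cdots u_k$ of length $k < n$ exhibits a period strictly smaller than $n$. But a Lyndon word is by definition aperiodic — it has period $\geq n$ — so this is impossible. Hence the only admissible divisor of $n$ is $n$ itself, and the number of rows of $M(u)$ is $n$.

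For the statement about the vertical projections, I would invoke Proposition~\ref{prop1}: arranging the $n$ cyclic shifts of any word of length $n$ and density $h$ row-wise gives an $n \times n$ matrix all of whose horizontal and vertical projections equal $h$. Since $u$ is Lyndon, these $n$ cyclic shifts are pairwise distinct, so the matrix formed from \emph{all distinct} cyclic shifts — which is $M(u)$ by definition — coincides with the matrix of Proposition~\ref{prop1}. Therefore every column sum of $M(u)$ equals $h$, as claimed.

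There is really no substantial obstacle here: the corollary is essentially a packaging of the proposition plus the defining property of Lyndon words, and the only mildly non-trivial ingredient is the elementary combinatorics-on-words fact that $s^k(u)=u$ implies $u$ is a power of a word of length $k$. If one wanted to be fully self-contained, the single point deserving a line of justification is that the number of distinct cyclic shifts of $u$ equals $n / (\text{smallest period of } u)$, so that aperiodicity (period $\geq n$, hence exactly $n$) yields exactly $n$ distinct shifts; everything else follows immediately.
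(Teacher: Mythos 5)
Your argument is correct and is exactly the reasoning the paper leaves implicit: the number of distinct cyclic shifts divides $n$ by the preceding proposition, aperiodicity of a Lyndon word rules out every proper divisor, and Proposition~\ref{prop1} then gives that all column sums equal $h$. No gaps; this matches the paper's (unwritten) intended proof.
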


The first $12$ rows of the matrix in Fig.~\ref{matrix} are obtained
by row wise arranging the $12$ different cyclic shifts of the Lyndon
word $u=(0)^6(1)^6$. Such a submatrix $M(u)$ has horizontal and
vertical projections equal to $6$, that is the density of $u$.

\begin{figure}
\begin{center}
\includegraphics[width = 8cm]{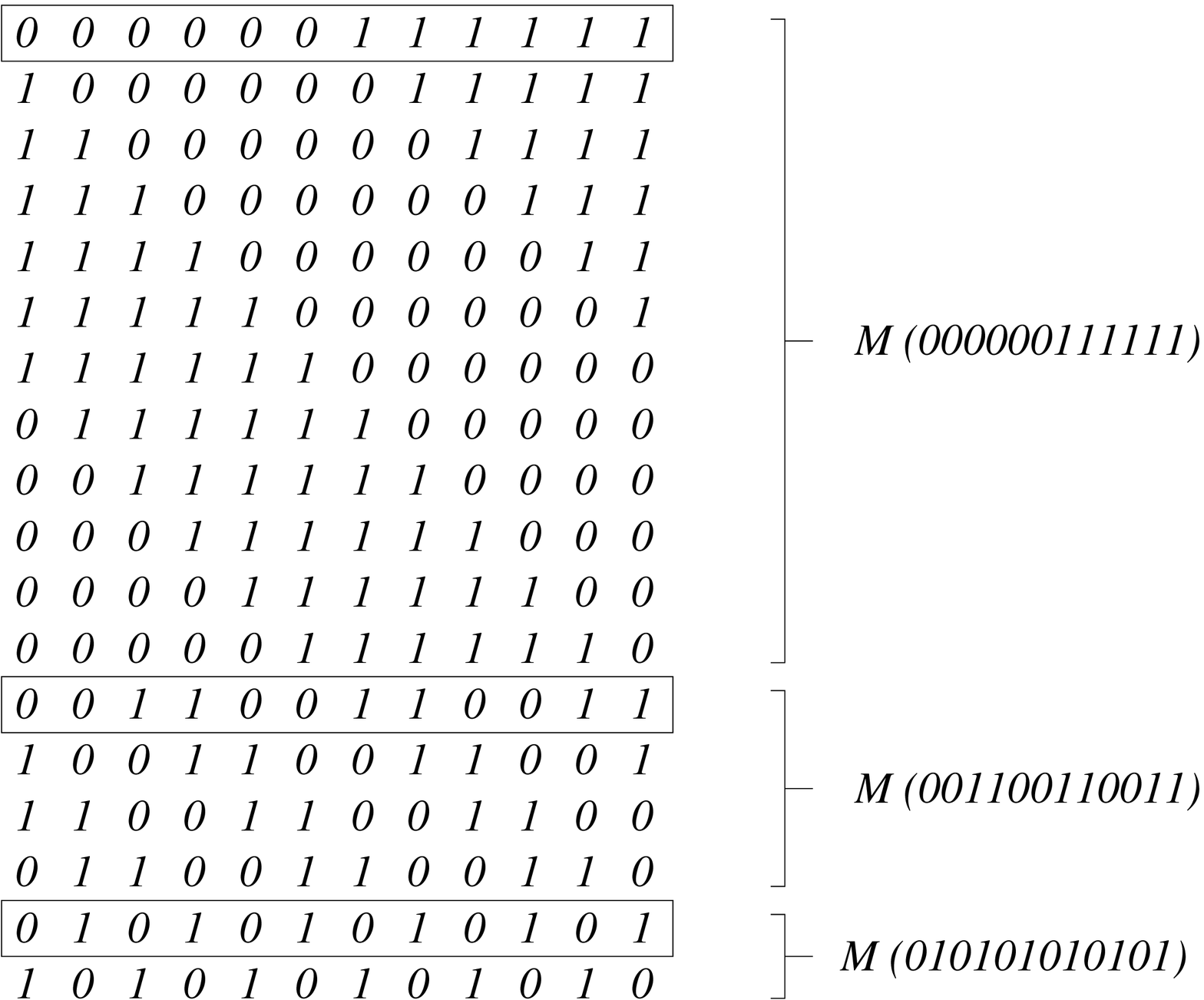}
\end{center}
\caption{A solution to $Reconstruction (H,V,{\mathcal{E}})$ when the
horizontal projections have constant value $6$, and the vertical
projections $9$. The submatrices $M(u)$ obtained by row wise
arranging the elements of three necklaces are highlighted. Note that
$M((0011)^3)$ and $M((01)^6)$ are the only two possible necklaces of
length $12$ and density $6$ having $4$ and $2$ rows, respectively.}
\label{matrix}
\end{figure}

\begin{proposition}
If $u=v^k$ (i.e. $u=v\dots v$, $k$ times), with $k=\gcd\{n,h\}$,
is a necklace of length $n$ and density $h$, and $v$ a Lyndon
word, then the cardinality of $[u]$ is equal to $n/k$, and the
vertical projections of $M(u)$ are all equal to $h/k$.
\end{proposition}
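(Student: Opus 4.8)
The plan is to use the factorization $u=v^k$ directly. Since the $k$ consecutive blocks of $u$ that are copies of $v$ partition its positions, $v$ has length $n/k$ and density $h/k$ (so, in particular, $k$ divides $\gcd(n,h)$). The cardinality of $[u]$ will come from a short periodicity argument, and the vertical projections will then follow immediately from Proposition~\ref{prop1}.

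First I would count the distinct cyclic shifts of $u$. Shifting $u=v^k$ left by a whole block, i.e. applying $s^{n/k}$, removes the leading $v$ and appends it at the end, returning $v^k=u$; hence $s^{n/k}(u)=u$. Let $p$ be the least positive integer with $s^p(u)=u$. Then $s^0(u),\dots,s^{p-1}(u)$ are pairwise distinct (an earlier coincidence would contradict the minimality of $p$) and every $s^i(u)$ equals $s^{i\bmod p}(u)$, so $[u]=\{s^0(u),\dots,s^{p-1}(u)\}$ and $|[u]|=p\le n/k$. Suppose $p<n/k$. From $s^p(u)=u$ we get $u_i=u_{i+p}$ for all $i$ with $i+p\le n$; restricting to $1\le i\le n/k-p$, where both $i$ and $i+p$ lie in the first block and $u$ agrees with $v$, this yields $v_i=v_{i+p}$, so $v$ has period $p<|v|$, contradicting the aperiodicity of the Lyndon word $v$. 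Therefore $|[u]|=p=n/k$, and the rows of $M(u)$ are exactly the shifts $s^0(u),s^1(u),\dots,s^{n/k-1}(u)$.

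Next I would read off the vertical projections. By Proposition~\ref{prop1}, the $n\times n$ matrix $A$ whose rows are all $n$ cyclic shifts of $u$ (listed with repetitions) has every vertical projection equal to $h$. Since $|[u]|=n/k$, each distinct shift of $u$ occurs exactly $k$ times among the rows of $A$, so $A$ is $k$ vertically stacked copies of $M(u)$; comparing column sums forces every vertical projection of $M(u)$ to equal $h/k$. Equivalently, fix a column $j$: as $i$ runs over $0,\dots,n/k-1$ the entries $(s^i(u))_j$ sweep out $n/k$ cyclically consecutive entries of $u$, i.e. one full period, which has the same number of ones as $v$, namely $h/k$.

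The only real obstacle is the lower bound $|[u]|\ge n/k$, i.e. ruling out a strictly shorter cyclic period of $u$; everything else is bookkeeping on top of Proposition~\ref{prop1}. The argument above reduces this to the defining property of $v$, via the elementary observation that a \emph{cyclic} period $t$ of $u$ restricts to an ordinary \emph{linear} period $t$ on every factor of $u$ longer than $t$, in particular on the prefix $v$. The degenerate cases $v=(0)$ and $v=(1)$, i.e. $h=0$ or $h=n$, need no special treatment: then $n/k=1$, $M(u)$ has the single row $u$, and the statement is trivial. Note also that this proposition contains the preceding Corollary as the case $k=1$.
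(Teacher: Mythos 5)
Your argument is correct, and it is more detailed than the paper itself, which states this proposition without proof, treating it as an immediate refinement of Proposition~\ref{prop1} and of the preceding statement that the cardinality of $[u]$ divides $n$ (the intended route being that the orbit of $u=v^k$ under cyclic shift is generated by the block shift $s^{n/k}$). Your periodicity argument establishes exactly this from scratch, and both of your ways of reading off the vertical projections (stacking $k$ copies of $M(u)$ to recover the $n\times n$ matrix of Proposition~\ref{prop1}, or sliding a window consisting of one full period of $u$) are sound. Note also that you never use the hypothesis $k=\gcd\{n,h\}$: your proof gives the statement for any factorization $u=v^k$ with $v$ a Lyndon word, which is why your closing remark that the Corollary is the case $k=1$ is legitimate for your version of the statement, though not for the proposition as literally stated (a Lyndon word $u$ with $\gcd\{n,h\}>1$ is not covered by it).

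One step deserves tightening. From $s^p(u)=u$ with $p<n/k$ you derive that $v$ has a \emph{linear} period $p<|v|$ and declare this contrary to the aperiodicity of $v$. Under the paper's definition, aperiodicity of a Lyndon word means its cyclic period is full, i.e.\ $v$ is not a proper power; a primitive word can perfectly well admit a linear period shorter than its length (e.g.\ $0010$ has linear period $3$), so the contradiction is not immediate unless you invoke the classical, but unproved-here, fact that Lyndon words are unbordered. The cheapest repair inside your own argument: since $s^{n/k}(u)=u$ and $s^{p}(u)=u$, also $s^{\gcd(p,\,n/k)}(u)=u$, so minimality of $p$ forces $p\mid n/k$; then the relation $v_i=v_{i+p}$ together with $p\mid |v|$ exhibits $v$ as a proper power of its prefix of length $p$, contradicting primitivity directly. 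With that one line added, the proof is complete.
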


Figure \ref{matrix} shows the $12/3=4$ different cyclic shifts of
the word $u=(0011)^3$ arranged from row $13$ to row $16$ of the
matrix, and the $12/6 = 2$ different cyclic shifts of the word $v =
(01)^6$ in rows 17 and 18. All the rows of $M(u)$ have horizontal
projections equal to $6$ and vertical projections equal to $6/3 =
2$, while the rows of $M(v)$ have horizontal projections equal to
$6$ and vertical projections equal to $6/6 = 1$.

In the following we will prove that a pair $H$ and $V$ of
projections satisfy Conditions $1$, $2$, and $3$ if and only if
they are consistent with a matrix in $\mathcal{E}$, solving
$Consistency(H, V, \mathcal{E})$.

Let $d_0=1,d_1, d_2,\dots,d_t$ be the increasing sequence of the
common divisors of $n$ and $h$. The following equation holds:

$$
{{n}\choose{h}} = \sum_{i=0 \dots,t} \frac{n}{d_i} \:\:
L\left(\frac{n}{d_i},\frac{h}{d_i}\right ).$$

This equation is an immediate consequence of the fact that each
word of length $n$ and density $h$ belongs to exactly one
necklace.

\begin{theorem}\label{lyndon}
Let $H$ and $V$ be two homogeneous vectors of projections of
dimension $m$ and $n$, and elements $h$ and $v$, respectively,
satisfying Conditions $1$, $2$, and $3$, i.e., being a valid
instance of $Consistency(H,V,\mathcal{E})$. Then, there exists a
Lyndon word $L(n/d_i,h/d_i)$ such that $n/d_i\leq m$.
\end{theorem}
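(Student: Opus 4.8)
The plan is to argue by contradiction: suppose that for \emph{every} common divisor $d_i$ of $n$ and $h$ we have $n/d_i > m$, and derive a violation of Condition $3$. The intuition is that Condition $3$ asserts $v \le (h/n)\binom{n}{h}$, i.e. the total number of $1$'s, which is $mv$ by homogeneity (equivalently $nh$ if we also use Condition $2$), is bounded by $h\binom{n}{h}$; but the available ``building blocks'' for a matrix in $\mathcal{E}$ with $m$ rows are precisely the cyclic-shift classes $M(u)$, each contributing at most $m$ rows, and if every Lyndon-word class at every density-scaling has more than $m$ elements we simply cannot assemble enough distinct rows.

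First I would recall the counting identity stated just before the theorem,
$$\binom{n}{h} = \sum_{i=0}^{t} \frac{n}{d_i}\, L\!\left(\frac{n}{d_i}, \frac{h}{d_i}\right),$$
which partitions all length-$n$ density-$h$ words into necklaces; the necklaces whose underlying Lyndon word has length $n/d_i$ come in $L(n/d_i, h/d_i)$ distinct cyclic classes, each of size $n/d_i$. Second, I would observe that a matrix $A \in \mathcal{E}$ with $m$ rows, horizontal projection $h$, and vertical projection $v$ consists of $m$ distinct rows, each of length $n$ and density $h$; each row lies in exactly one necklace, and from a necklace with underlying Lyndon length $n/d_i$ we can use at most $\min\{m, n/d_i\}$ of its rows. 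Under the contradiction hypothesis $n/d_i > m$ for all $i$, every necklace of length $n$ contributes at most $m$ rows, but more importantly the total number of distinct length-$n$ density-$h$ words usable is still $\binom{n}{h}$; the real point is a sharper accounting. Using Condition $2$, $mv = nh$, so $v = nh/m$; combining with Condition $3$,
$$\frac{nh}{m} = v \le \frac{h}{n}\binom{n}{h}, \qquad\text{i.e.}\qquad m \ge \frac{n^2}{\binom{n}{h}}.$$
Meanwhile, if $n/d_i > m$ for all $i$, then in particular $n = n/d_0 > m$, and I would bound $\binom{n}{h}$ from below using the identity above: since each summand $\tfrac{n}{d_i} L(n/d_i, h/d_i)$ is a nonnegative multiple of something, and $n/d_0 \cdot L(n,h) \ge \dots$, one squeezes $\binom{n}{h} < n^2/m \cdot(\text{something} \le 1)$, contradicting $m \ge n^2/\binom{n}{h}$. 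The cleanest route is: $m < n/d_i$ for all $i$ forces $m < \min_i (n/d_i)$, and then one shows $\binom{n}{h} < n \cdot m / h \cdot (\text{bound})$ fails against Condition $3$ rewritten as $\binom{n}{h} \ge nv/h = n^2/m$.

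The main obstacle I anticipate is making the final inequality chain tight enough: the crude bound ``each necklace gives at most $m$ rows'' together with ``there are $\sum_i L(n/d_i,h/d_i)$ necklaces'' only yields $m \cdot (\text{number of necklaces}) \ge m$, which is vacuous; one must instead weight each necklace by its \emph{actual} size $n/d_i$ and use that the rows we place sum to $mv$ vertically and that placing all $n/d_i$ shifts of an $n/d_i$-Lyndon-based necklace contributes vertical projection exactly $h/d_i$ per column (by the Propositions above). So the correct contradiction is quantitative: assuming $n/d_i > m$ for every $i$ means no single necklace can be used ``in full'' to contribute its clean homogeneous column sum, and I would show that the partial contributions cannot reach $v$ in every column simultaneously — or, more directly, that $m \ge n^2/\binom{n}{h}$ from Conditions $2$ and $3$ forces $m \ge n/d_i$ for at least one $i$ via the displayed partition identity, since $\binom{n}{h} = \sum_i (n/d_i) L(n/d_i,h/d_i) \le (\max_i n/d_i)\sum_i L(n/d_i,h/d_i)$ and a matching lower bound on the number of necklaces. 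Pinning down this last comparison between $\binom{n}{h}$, $n^2/m$, and $\max_i(n/d_i)$ is where the care is needed; everything else is bookkeeping with the identities already in hand.
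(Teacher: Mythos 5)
There is a genuine gap here, and it is not merely a matter of ``pinning down'' the comparison you defer at the end: the route through Condition~$3$ and the necklace-counting identity cannot yield the contradiction at all. From Conditions~$2$ and~$3$ you correctly extract $m\ge n^2/\binom{n}{h}$, and you assume $m<n/d_i$ for every $i$; but these two real inequalities are simultaneously satisfiable whenever $n\gcd(n,h)<\binom{n}{h}$, which is the typical situation. For instance, with $n=12$, $h=6$ one has $n^2/\binom{n}{h}=144/924<1$ while $\min_i\, n/d_i=n/\gcd(n,h)=2$, so a value such as $m=1$ lies strictly between the two bounds, and no weighting of the identity $\binom{n}{h}=\sum_i (n/d_i)\,L(n/d_i,h/d_i)$, nor any row-assembly bookkeeping, can separate them. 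What excludes such an $m$ is integrality, not counting: $mh=nv$ has no integer solution $v$ when $m=1$, $n=12$, $h=6$. In other words, the theorem is an arithmetic consequence of Condition~$2$ alone; the distinct-rows considerations behind Condition~$3$ play no role in it, and any argument conducted purely at the level of real inequalities among $m$, $n^2/\binom{n}{h}$ and the $n/d_i$ is bound to fail.

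The missing idea is the divisibility step, which is exactly how the paper argues. Put $d=d_t=\gcd(n,h)$, $n'=n/d$, $h'=h/d$. Condition~$2$ gives $mh=nv$, hence $mh'=n'v$; since $\gcd(n',h')=1$, $n'$ divides $m$, and therefore $n'=n/d_t\le m$. Moreover $L(n',h')$ is nonempty because $n'$ and $h'$ are coprime (for $0<h'<n'$ the word $(0)^{n'-h'}(1)^{h'}$ is aperiodic and least in its cyclic class), so the required Lyndon word exists with $i=t$. Your proposal never invokes divisibility or integrality anywhere, and that---rather than the necklace counting you set up---is the crux of the statement.
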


\proof

Let us proceed by contradiction assuming that there does not exist
a Lyndon word whose length is $n/d < m$, for each $d\in d_1,\dots
,d_t$. Since $H$ and $V$ are homogeneous, and satisfy Conditions
$1$ and $2$, then there exists a matrix $A$ having $H$ and $V$ as
projections (a consequence of Ryser's characterization of solvable
instances, as stated in \cite{bro}, Theorem $3$).

Let us assume that $d=d_t=\gcd\{n,h\}$, $h'=h/d$, and $n'=n/d$; from
Condition $1$, it holds
\begin{equation}\label{eq1}h' m = v n'
\end{equation}
with $n'$ and $h'$ coprime, so $v=h' (m/n')$, and $n'$ divides
$m$. The hypothesis $n'>m$ leads to a contradiction. \qed

Theorem~\ref{lyndon} can be rephrased saying that if $H$ and $V$
are homogeneous consistent vectors of projections, then there
exists a solution that contains all the elements of a necklace
$[u]$. The solution in linear time of
$Consistency(H,V,\mathcal{E})$ is a neat consequence:

\begin{corollary}
Let $H$ and $V$ be two homogeneous vectors satisfying Conditions
$1$, $2$, and $3$. There always exists a matrix having different
rows, and $H$ and $V$ as projections.
\end{corollary}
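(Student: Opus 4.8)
The plan is to reduce the corollary to a clean combinatorial existence statement and then settle it by stacking balanced blocks built from necklaces. First I would unfold Conditions~$1$, $2$, $3$ for homogeneous $H=(h,\dots ,h)$ and $V=(v,\dots ,v)$. Condition~$2$ says $hm=vn$; writing $d=\gcd\{n,h\}$, $n'=n/d$ and $h'=h/d$, coprimality of $n'$ and $h'$ forces $n'\mid m$ and $h'\mid v$, so $m=q n'$ and $v=q h'$ for one and the same integer $q\ge 1$ (this is exactly the arithmetic appearing in the proof of Theorem~\ref{lyndon}, and it also yields $n\mid hm$). Condition~$3$, in its equivalent form $m=vn/h\le\binom{n}{h}$, then merely says $q\le\binom{n}{h}/n'$. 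Hence it suffices to exhibit $m=q n'$ pairwise distinct binary words of length $n$ and density $h$ whose columns each carry exactly $v=q h'$ ones.

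For the construction I would use the matrices $M(u)$ attached to necklaces. By the propositions established above, if $[u]$ is a necklace of length $n$ and density $h$, then $M(u)$ has $|[u]|$ pairwise distinct rows, $|[u]|$ is a multiple of $n'$, and all column sums of $M(u)$ equal $h\,|[u]|/n$; moreover distinct necklaces are disjoint orbits, so rows coming from different necklaces are automatically distinct. Consequently, if I select a family of necklaces whose sizes add up to exactly $m$ and stack the corresponding matrices $M(u)$, I obtain a matrix with $m$ distinct rows, horizontal projections $h$, and column sums $h\,m/n=v$ --- precisely a solution. The supply is ample: by the identity $\binom{n}{h}=\sum_{i}\frac{n}{d_i}\,L(n/d_i,h/d_i)$ the $\binom{n}{h}$ density-$h$ words split into these necklaces, their sizes total $\binom{n}{h}\ge m$, and by Theorem~\ref{lyndon} at least one has the minimal size $n'\le m$, which gives a foothold for a greedy selection: peel off a full necklace $[u]$ of size $r\le m$; if $r=m$ stop, otherwise recurse on the residual instance $H'=(h,\dots ,h)$ with $m-r$ rows and $V'=(v-hr/n,\dots ,v-hr/n)$, which is again homogeneous and, by a routine check, still satisfies Conditions~$1$, $2$, $3$ (its parameter $q-r/n'$ is once more a nonnegative integer).

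The step I expect to be the genuine obstacle is matching the selected sizes to $m$ \emph{exactly} while keeping the column sums uniform: all necklace sizes are multiples of $n'$ and their grand total exceeds $m$, but they vary, so a sub-collection of whole necklaces summing to precisely $m$ need not exist; and in the peeling recursion the residual solution returned by induction must be chosen disjoint from the rows of the necklace already placed. I would close this gap either by a counting argument, showing that after each peeling step enough necklaces of size $n'$ (or, failing that, suitably balanced partial blocks inside a larger necklace) remain unused, since the total supply $\binom{n}{h}$ strictly exceeds the number of rows used so far; or, more robustly, by invoking an equitable edge-splitting of the complete $h$-uniform hypergraph on $n$ vertices, which directly produces a $v$-regular spanning subhypergraph with exactly $m$ edges whenever $n\mid hm$ and $m\le\binom{n}{h}$, both of which have been established above. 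Granting this, distinctness inside each block and the projection bookkeeping are immediate, and the corollary follows.
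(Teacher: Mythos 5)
Your overall strategy is the paper's: stack the orbit matrices $M(u)$ of distinct necklaces, use the arithmetic $m=qn'$, $v=qh'$ from Condition~$2$, and Condition~$3$ in the form $m\le\binom{n}{h}$. But the step you yourself flag as the ``genuine obstacle'' is exactly the part that the paper has to supply a separate idea for, and neither of your two proposed closures works as stated. The counting fallback fails because small necklaces can be unique, not merely scarce: take $n=6$, $h=3$, $m=4$, $v=2$. Conditions $1$--$3$ hold, the necklace sizes are $6,6,6,2$ (three Lyndon classes plus the single class of $010101$), and no sub-collection of whole necklaces sums to $4$; there is only \emph{one} necklace of the minimal size $n'=2$, so ``enough necklaces of size $n'$ remain unused'' is simply false, and your peeling recursion gets stuck even though a solution exists (e.g.\ the four rows $000111$, $111000$, $001110$, $110001$). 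The paper closes precisely this gap with Proposition~\ref{bar}: inside the aperiodic necklace of $u=(0)^{n-h}(1)^{h}$, the sub-orbit $u',s^{h}(u'),\dots ,s^{(k-1)h}(u')$ with $k=n/\gcd\{n,h\}$ has $n'$ rows and \emph{homogeneous} column sums $h'$, and its $\gcd\{n,h\}$ shifted copies $M_i(u)$ partition the full orbit into balanced blocks; these blocks are what the algorithm Rec uses to fill any residual multiple of $n'$ rows. You gesture at ``suitably balanced partial blocks inside a larger necklace,'' but you neither identify them nor prove they are balanced, and for a generic necklace a sub-orbit of size $n'$ need not have uniform column sums, so this is a genuine missing lemma, not a routine check.

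Your second fallback, invoking an ``equitable edge-splitting of the complete $h$-uniform hypergraph'' that yields a $v$-regular subhypergraph with exactly $m$ edges whenever $n\mid hm$ and $m\le\binom{n}{h}$, is essentially a restatement of the corollary itself (a matrix with $m$ distinct rows of weight $h$ and constant column sums $v$ \emph{is} such a subhypergraph), so as written it is circular; it would only be acceptable if you cited and correctly instantiated a concrete external theorem of Baranyai type, which you do not, and which the paper deliberately avoids by giving the explicit Lyndon-word construction. The arithmetic reductions in your first paragraph and the observation that stacking whole distinct necklaces automatically gives distinct rows and column sums $h\,m/n$ are correct and match the paper; what is missing is the one nontrivial ingredient (the balanced sub-blocks of the necklace of $(0)^{n-h}(1)^{h}$, i.e.\ Proposition~\ref{bar}) that makes the row count come out to exactly $m$.
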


The result of Theorem~\ref{lyndon}, together with the following
proposition that point out a property of the necklace whose
representant is $u=(0)^{n-h}(1)^{h}$, will be used in the next
section to solve $Reconstruction(H,V,\mathcal{E})$.

\begin{proposition}\label{bar}
Let $u'$ be an element of the class $[u]$, with
$u=(0)^{n-h}(1)^{h}$. The elements $u'$, $s^h(u')$, $s^{2h}(u')$,
$\dots$, $s^{(k-1)h}(u')$, with $k= n/ \gcd \{n,h\}$, forms a
subclass of $[u]$, and they can be arranged in a matrix $A'$ such
that

\begin{description}
\item{1.} the vertical projections of $A'$ are homogeneous and
equal to $h / \gcd\{n,h\}$;

\item{2.} $A'$ is minimal with respect to the number of rows among
the matrices having $H$ as horizontal projections, and homogeneous
vertical projections.
\end{description}
\end{proposition}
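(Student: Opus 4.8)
The plan is to view each row as the indicator of a cyclic interval of length $h$ and to reduce the statement to an elementary counting argument together with a double-counting lower bound.

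First I would record the basic structure. The word $u=(0)^{n-h}(1)^{h}$, and hence every $u'\in[u]$, is cyclically the indicator vector of an interval of $h$ consecutive positions modulo $n$; since such a word has a single maximal run of $1$'s in its cyclic arrangement, it cannot be a proper power, so $[u]$ is a Lyndon necklace and $|[u]|=n$. Equivalently, for an integer $a$ we have $s^{a}(u')=u'$ if and only if $n\mid a$. Write $g=\gcd\{n,h\}$, $n'=n/g$, $h'=h/g$, so $\gcd\{n',h'\}=1$ and $k=n'$. Then $s^{jh}(u')=s^{j'h}(u')$ iff $n\mid(j-j')h$ iff $k\mid(j-j')$; hence $u',s^{h}(u'),\dots,s^{(k-1)h}(u')$ are $k$ pairwise distinct elements of $[u]$, and $s^{kh}(u')=u'$, so they form a closed subclass of $[u]$. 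Stacking them in this order as the rows of $A'$ gives a matrix each of whose rows has exactly $h$ ones, so the horizontal projections of $A'$ are homogeneous and equal to $h$.

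Next I would compute the vertical projections. If the $1$'s of $u'$ occupy the cyclic interval $\{p,p+1,\dots,p+h-1\}\pmod n$, then the $1$'s of $s^{jh}(u')$ occupy $\{p-jh,\dots,p-jh+h-1\}\pmod n$, an interval with left endpoint $p-jh$. As $j$ runs over $\{0,\dots,k-1\}$, the residues $jh\bmod n$ run exactly over the multiples of $g$ modulo $n$ (since $jh=g\cdot jh'$ and $jh'\bmod n'$ sweeps all of $\mathbb{Z}/n'\mathbb{Z}$ as $\gcd\{n',h'\}=1$), so the left endpoints $p-jh$ sweep out exactly the residue class $p+g\mathbb{Z}$ modulo $n$. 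Fix a column $c$; it is covered by the row coming from $s^{jh}(u')$ precisely when $p-jh$ lies in the length-$h$ window $\{c-h+1,\dots,c\}\pmod n$. Because $g\mid h$, every window of $h$ consecutive residues contains exactly $h/g$ members of any fixed class modulo $g$; hence column $c$ is covered exactly $h/g$ times. This proves that the vertical projections of $A'$ are homogeneous and equal to $h/\gcd\{n,h\}$, which is item~$1$.

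Finally, for the minimality in item~$2$, I would argue by double counting. Let $B$ be any binary matrix with $n$ columns and $r$ rows, all row-sums equal to $h$ and all column-sums equal to a common value $v$. Counting the $1$'s two ways gives $rh=nv$, i.e. $r=n'v/h'$ with $\gcd\{n',h'\}=1$; integrality of $r$ forces $h'\mid v$, hence $v\ge h'$ and $r\ge n'=n/\gcd\{n,h\}=k$. Since $A'$ has exactly $k$ rows (indeed pairwise distinct ones, by the second paragraph), it attains this bound and is therefore minimal among matrices with horizontal projections $H$ and homogeneous vertical projections. The only delicate point is the counting step of the previous paragraph — that the left endpoints sweep out an entire residue class modulo $g=\gcd\{n,h\}$ and that a length-$h$ window meets that class in exactly $h/g$ points; the rest is bookkeeping.
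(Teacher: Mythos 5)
Your proof is correct. The paper itself gives no argument here beyond the single sentence that the claim ``directly follows from the properties of the greatest common divisor,'' and your write-up is exactly the natural filling-in of that remark: distinctness of the $k$ shifts via $n\mid (j-j')h \iff k\mid (j-j')$, the column counts via the left endpoints sweeping a residue class modulo $g=\gcd\{n,h\}$, and the minimality via the double count $rh=nv$ with $\gcd(n/g,h/g)=1$ forcing $r\ge n/g$ (the last point being an argument the paper does not even sketch). The only caveat, not worth more than a parenthesis, is that your intermediate claim that $[u]$ is aperiodic of size $n$ presupposes $0<h<n$; in the degenerate cases $h\in\{0,n\}$ the stated conclusions still hold trivially.
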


The proof directly follows from the properties of the greatest
common divisor. Let us denote with $M_0(u)$ the matrix $A'$
defined in Proposition~\ref{bar}, and with $M_{i}(u)$ the matrix
defined in the same way starting from the word $u=
(1)^i(0)^{n-h}(1)^{h-i}$, with $0\leq i < \gcd\{n,h\}(=n/k)$.

\section{An algorithm to solve $Reconstruction(H,V,\mathcal{E})$}

We start recalling that in~\cite{sawada} a constant amortized time
(CAT) algorithm {\bf FastFixedContent} for the exhaustive generation
of necklaces $N(n,h)$ of fixed length and density is presented. The
author then shows that a slight modification of his algorithm can
also be applied for the CAT generation of the Lyndon words $L(n,h)$.
In particular, his algorithm --here denoted {\bf GenLyndon$(n,h)$}--
constructs a generating tree of the words, and since the tree has
height $h$, the computational cost of generating $k$ words of
$L(n,h)$ is $O( k \cdot h \cdot n)$.

\smallskip

Let us consider the following algorithm that reconstructs an element
of $\mathcal{E}$ from a couple of homogeneous horizontal and
vertical projections $H$ and $V$:

\smallskip

\noindent{\bf Rec$(H,V,{\mathcal E)}$}

\begin{description}
\item[Input :] Two homogeneous vectors: $H=(h, \ldots ,h)$ of
length $m$, and $V=(v, \ldots ,v)$ of length $n$, satisfying
Conditions $1$, $2$, and $3$. \item[Output :] An element of the
class $\mathcal E$ having $H$ and $V$ as horizontal and vertical
projections, respectively.

\item[Step 1:] Let compute the sequence $d_0=1 < d_1 < d_2 < \dots
< d_t$ of the common divisors of $n$ and $h$, and initialize the
matrix $A_{-1}=\emptyset$.

\item[Step 2:] For $i=0$ to $t$ do:

\begin{description}
\item[Step 2.1:] By applying {\bf GenLyndon$(n,h)$}, generate the
sequence of $q=\min{ \{ \lfloor v/h \rfloor, L(n,h) \} }$ Lyndon
words, denoted $u_1, \ldots ,u_q$. If $q\not= L(n,h)$, then do not
include in the sequence the Lyndon word $(0)^{n-h}(1)^h$.

\item[Step 2.2:] Create the matrix $A_i$, obtained by row wise
arranging the matrices $A_{i-1}$ and $M((u_j)^{d_i})$, for $j=1,
\ldots ,q$.

Update $v=v-q\cdot h$.

If $v=0$ then output $A_i$,

else if $q\not= L(n,h)$, create the matrix $A$ obtained by row wise
arranging the matrix $A_{i}$ with the column wise arranging of $d_i$
times the matrices $M(u)_j$, with $u=(0)^{n-h}(1)^h$, $j=0, \ldots
,q'-1$, and $q'=v \cdot \gcd\{n,h\}/h$,

else update $n=n/d_{i+1}$, and $h=h/d_{i+1}$.
\end{description}
\end{description}

\medskip

A brief explanation of Step 2.2 is needed: for each common divisor
$d_i$ of $n$ and $h$, the algorithm considers all the Lyndon words
of $L(n/d_i,h/d_i)$; if the matrices obtained from them can be
stuffed inside the solution matrix, then the algorithm performs this
action, and starts again the step with $i=i+1$, otherwise the
algorithm sets aside the word $u=(0)^{n/d_i}(1)^{h/d_i}$, and stuffs
the matrices obtained from the other Lyndon words in the solution
matrix. Since the remaining vertical projections $v'$ are less than
$h/d_i$, then the matrices $M(u)_j$, with $1\leq j \leq q'$ as
defined in Proposition \ref{bar}, can be used to fill the gap,
without going on generating the elements of
$L(n/d_{i+1},h/d_{i+1})$.

To better understand the reconstruction algorithm, we first propose
a simple example with the instance $H=(2, \ldots ,2)$ of length
$m=15$, and $V=(5, \ldots ,5)$ of length $n=6$. In Step $1$ the
values $d_0=1$, and $d_1=2$ are set.

In Step $2$, {\bf GenLyndon$(6,2)$} generates $q=2$ Lyndon words,
i.e. the words $000011$, and $000101$; since $L(6,2)=2$, then the
word $000011$ is included in the sequence. Now the matrix $A_0$,
depicted in Fig.~\ref{matrix2}, on the left, is created. Finally,
the values $v=5-2\cdot 2=1$, $n=6/2=3$, and $h=2/2=1$ are updated.

The second run of Step $2$ starts, and {\bf GenLyndon$(3,1)$}
generates the Lyndon word $001$. The final matrix $A_1$ is created
by row wise arranging $A_0$ with the matrix $M((001)^2)$ as shown in
Fig.~\ref{matrix2}, on the right.

\begin{figure}
\begin{center}
\includegraphics[width = 7.5cm]{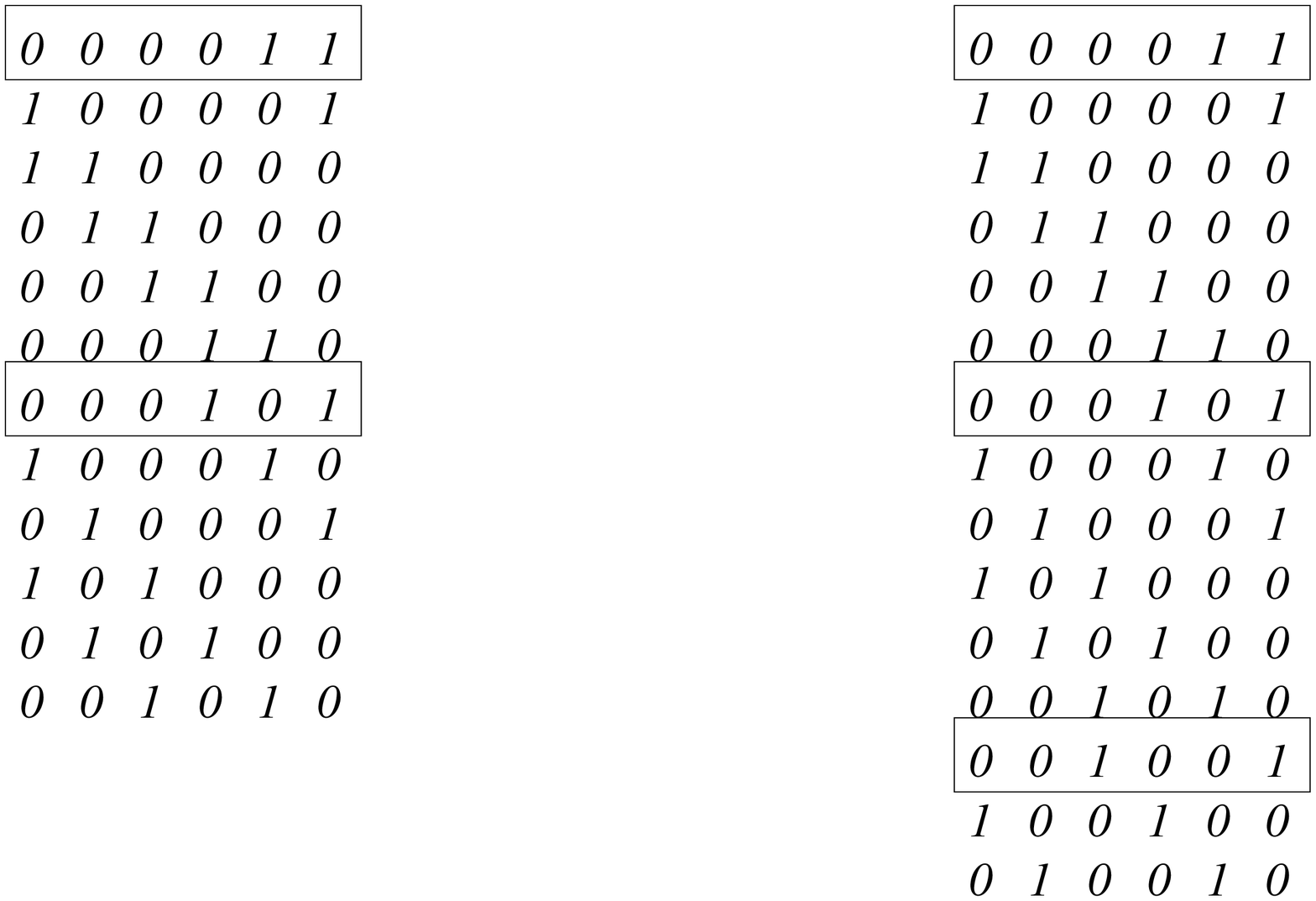}
\end{center}
\caption{The solution of dimension $15 \times 6$ obtained by
applying $Rec(H,V,{\mathcal{E}})$ when the horizontal projections
have constant value $2$, and the vertical projections $5$.}
\label{matrix2}
\end{figure}

A second example concerns the use of the word $(0)^{n-h}(1)^h$
that in certain cases is set aside from the sequence of Lyndon
words generated in Step $2$: the instance we consider is $H=(3,
\ldots ,3)$ of length $m=15$, and $V=(5, \ldots ,5)$ of length
$n=9$. In Step $1$ the values $d_0=1$, and $d_1=3$ are set.

In Step $2$, {\bf GenLyndon$(9,3)$} generates $q=\min\{\lfloor 5/3
\rfloor,L(9,3)\}=1$ Lyndon words, i.e. the word $000001011$; since
$q\not= L(9,3)$, then the word $000000111$ is not included in the
sequence. Now the matrix $A_0$, depicted in Fig.~\ref{matrix3}, on
the left, is created. The value $v=5-3\cdot 1=2$ is set.

Now, since $q\not= L(9,3)$, $q'=2(= 2 \cdot \gcd\{9,3\} / 3)$
submatrices of $M(000000111)$ are computed, as defined in
Proposition~\ref{bar}, and row wise arranged with $A_0$, obtaining
the matrix in Fig.~\ref{matrix3}, on the right.

Note that without the use of the Lyndon word $000000111$, the
procedure is not able to reach the solution since in the second
run of Step $2$, {\bf GenLyndon$(3,1)$} generates only one Lyndon
word, i.e. $001$, whose matrix $M((001)^3)$ has homogeneous
vertical projections equal to $1$, not enough to reach the desired
value~$2$.

\begin{figure}
\begin{center}
\includegraphics[width = 8cm]{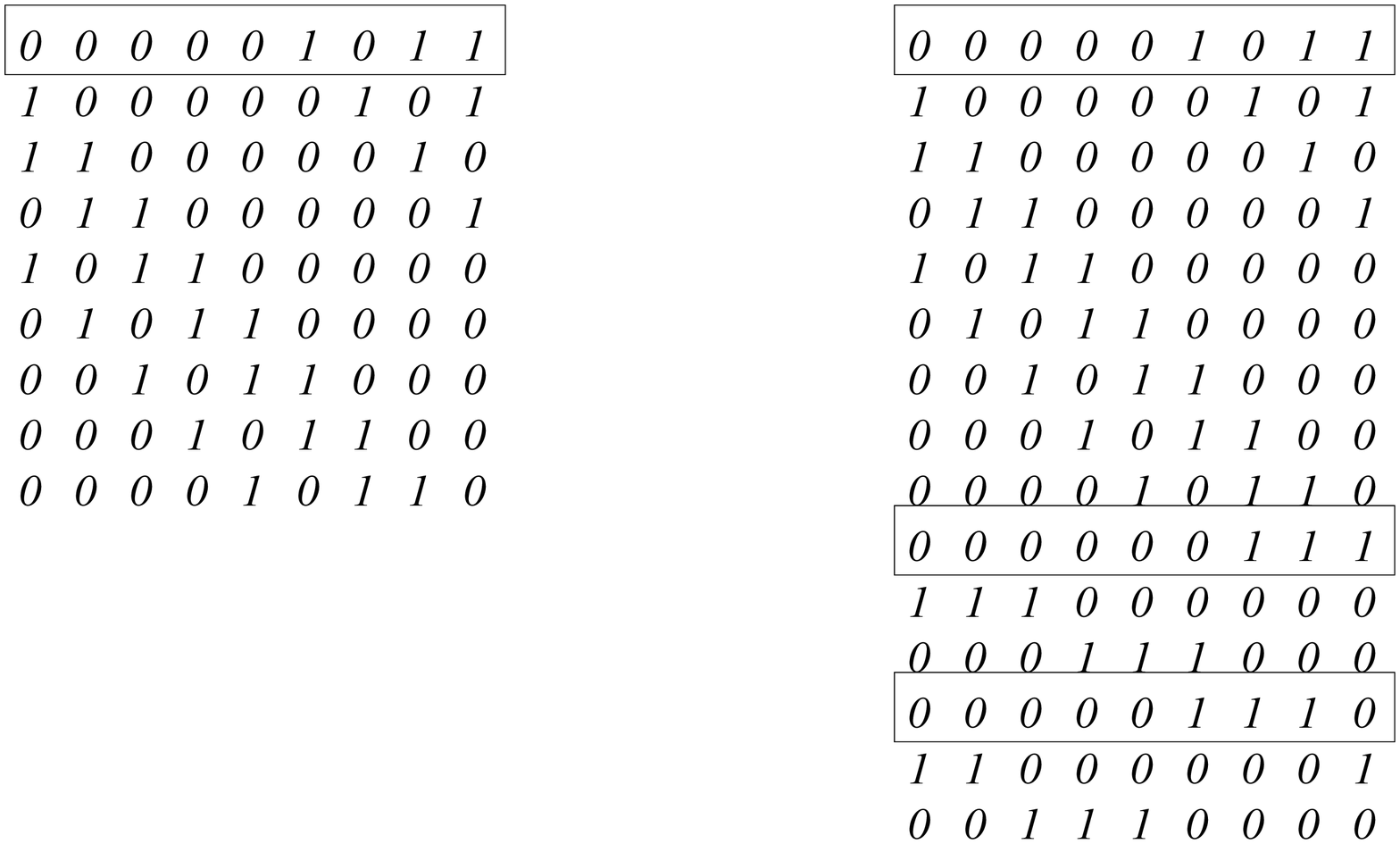}
\end{center}
\caption{The solution of dimension $15 \times 9$ obtained by
applying $Rec(H,V,{\mathcal{E}})$ when the horizontal projections
have constant value $3$, and the vertical projections $5$.}
\label{matrix3}
\end{figure}

\medskip

The validity of $Rec(H,V,{\mathcal{E}})$ is a simple consequence of
Theorem \ref{lyndon}. Clearly, the obtained matrix has homogeneous
horizontal and vertical projections, equal to $h$ and $v$,
respectively, and, by construction, all the rows are distinct.
Moreover, the algorithm always terminates since at each iteration,
we add as many rows as possible to the final solution. Concerning
the complexity analysis, we need to generate $O(m)$ different Lyndon
words and shift each of them $O(n)$ times. So, since the algorithm
{\bf GenLyndon$(n,h)$} requires  $O( k \cdot h \cdot n)$ steps to
generate $k$ words of $L(n,h)$, the whole process takes polynomial
time.

{\bf Remark:} Let us consider the special case where
$H=V=(h,\ldots,h)$ with $0<h<n$. The step 2.1 of Rec gives $q=1$
and $Rec(H,H,{\mathcal{E}})$ returns the matrix $A_0$ with the
first row $(0)^{n-h}(1)^h$. Hence we remark that $^tA_0=A_0$ and
so any two columns are different.

In graph $G$ a {\it twin} is a pair of vertices $\{u,v\}$ such
that $u$ and $v$ have the same neighborhood $N_G(u)=N_G(v)$.
Moreover a graph $G$ is {\it regular} if each vertex has same
number of neighbors. Using a straightforward counting argument we
have that if $G=(X,Y,E)$ is a bipartite regular graph  then $\vert
X\vert=\vert Y\vert$.   Hence if $G=(X,Y,E)$ is a bipartite
regular graph without twins its  incidence matrix $A_G$ satisfies:
(1) the horizontal and vertical projections satisfy $H=V$, (2)
both the horizontal and vertical rows are pairwise distinct.

The next result follows directly  from the algorithm {\bf
Rec$(H,V,{\mathcal E)}$} we designed above.

\begin{corollary}
Given $n$ and $k$ two positive integers, the construction of a
$k$-regular bipartite graph $G=(X,Y,E), \vert X\vert=\vert
Y\vert=n,$ without twins, if any, can be done in polynomial time.
Moreover the following condition  characterizes the degree
sequence of  a $k$-regular bipartite graphs without twins:
$d_i=k,0<k<n,$ for each vertex $v_i\in X\cup Y$.
\end{corollary}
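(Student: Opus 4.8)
The plan is to derive the corollary as a direct translation of the reconstruction algorithm and its correctness, already established by Theorem~\ref{lyndon} and the surrounding discussion, into the language of bipartite graphs. First I would record the dictionary between the two settings: a bipartite graph $G=(X,Y,E)$ with $|X|=|Y|=n$ is encoded by its incidence matrix $A_G=(a_{ij})$ with $a_{ij}=1$ iff $x_iy_j\in E$; the degree of $x_i$ is the $i$-th horizontal projection and the degree of $y_j$ is the $j$-th vertical projection; the graph is $k$-regular precisely when $H=V=(k,\ldots,k)$; and ``$G$ has no twins'' means no two vertices of $X$ share a neighborhood \emph{and} no two vertices of $Y$ do, i.e. the rows of $A_G$ are pairwise distinct and the columns of $A_G$ are pairwise distinct. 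Thus the existence of a $k$-regular twin-free bipartite graph on $n+n$ vertices is equivalent to the existence of a matrix in $\mathcal{E}$ with $H=V=(k,\ldots,k)$ whose columns are also pairwise distinct.

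Next I would invoke the Remark immediately preceding the corollary: when $H=V=(h,\ldots,h)$ with $0<h<n$, running $Rec(H,H,\mathcal{E})$ yields the matrix $A_0$ which is symmetric, $^tA_0=A_0$, with first row $(0)^{n-h}(1)^h$; since $A_0\in\mathcal{E}$ its rows are distinct, and by symmetry its columns are distinct as well, so $A_0$ is the incidence matrix of a $k$-regular twin-free bipartite graph (here $k=h$). This settles the ``if any'' construction in polynomial time: for $0<k<n$ the algorithm $Rec$ runs in polynomial time in $n$ (as analyzed in the paragraph following the algorithm, since $m=n$ here), and in the degenerate cases $k=0$ (empty graph on two vertices per side — which has twins as soon as $n\ge 2$) and $k=n$ (complete bipartite graph — likewise has twins when $n\ge 2$) one checks directly that no twin-free example exists, while for $n=1$ the single-vertex-per-side cases are trivial. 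I would also note that Conditions~$1$, $2$, $3$ are automatically satisfied when $H=V$ with $0<k<n$ (Condition~$2$ is $nk=nk$; Condition~$1$ is $k\le n$; Condition~$3$ reads $k\le \frac{k}{n}\binom{n}{k}$, i.e. $\binom{n}{k}\ge n$, which holds for $1\le k\le n-1$), so $Rec$ applies.

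Finally, for the characterization of degree sequences I would argue both directions. If $G=(X\cup Y,E)$ is $k$-regular and twin-free with $|X|=|Y|$, then trivially $d_i=k$ for every vertex, and the counting argument $|X|=|Y|$ (sum of degrees on each side equals $k|X|=k|Y|$, hence $|E|$ counted both ways forces $|X|=|Y|$) is already cited in the text; twin-freeness together with regularity forces $0<k<n$, since $k=0$ or $k=n$ produces twins whenever there are at least two vertices on a side. Conversely, given the all-$k$ sequence of length $2n$ with $0<k<n$, the matrix $A_0$ constructed above realizes it, proving the sequence is the degree sequence of such a graph. The only real subtlety — the ``main obstacle'' — is verifying that the \emph{columns} of the matrix returned by $Rec$ are distinct, not merely the rows; but this is exactly what the Remark's observation $^tA_0=A_0$ gives us in the homogeneous-square case, so once the dictionary and the boundary cases $k\in\{0,n\}$ are handled, the corollary follows with no further work.
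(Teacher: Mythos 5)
Your proposal is correct and follows essentially the same route as the paper, whose proof is simply to invoke the preceding Remark: for $H=V=(k,\ldots,k)$ with $0<k<n$ the algorithm $Rec$ outputs a symmetric matrix $A_0$ with distinct rows, hence distinct columns, which is exactly the incidence matrix of a $k$-regular twin-free bipartite graph. Your additional checks (Conditions $1$--$3$ for $H=V$, the boundary cases $k\in\{0,n\}$, and both directions of the characterization) are sound elaborations of what the paper leaves implicit.
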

\begin{proof}
It directly follows from the remark just above.
\end{proof}

\section{Reconstruction of an $h$-uniform hypergraph with span one}\label{spanone}
Let us consider  the case where the $h$-uniform hypergraph to
reconstruct is almost $d$-regular, in other words its degree
sequence has a span one, i.e.,  its vertical projections are
$V=(v,\dots,v,v-1,\dots,v-1)$. So, let us indicate with
$\mathcal{E}_1$, the set of matrices having different rows,
homogeneous horizontal projections, and vertical span one
projections. In order to solve this problem we will use the
algorithm $Rec(H,V,{\cal E})$ we designed in the previous section.

Again in \cite{bro}, it has been proved that also for span one
projections vectors, Conditions $1$ and $2$ are sufficient to
ensure the existence of a compatible matrix; again
%
Condition $3$, formulated with ${\cal E}_1$ instead of $\cal E$,
succeeding in forcing that matrix to belong to the set
$\mathcal{E}_1$. So, let us consider the following algorithm that
relies on Rec$(H,V,{\mathcal E)}$:

\smallskip

\noindent{\bf RecSpan1$(H,V,{\mathcal E}_1)$}

\begin{description}
\item[Input :] Two vectors: $H=(h, \ldots ,h)$ of length $m$, and
$V=(v, \ldots ,v,v-1,\dots, v-1)$ of length $n$, satisfying
Conditions $1$, $2$, and $3$.

\item[Output :] An element $A_1$ of the class ${\mathcal E}_1$
having $H$ and $V$ as horizontal and vertical projections,
respectively.

\item[Step 1:] let $n_0$ and $n_1$ be the number of elements $v$
and $v-1$ of $V$, respectively, and set $k$ to be the least
integer such that it is both multiple of $h$ and $n$, and greater
than $h \cdot m$. Create the homogeneous vectors of projections
$H'$ and $V'$ such that $H'=(h,\dots,h)$ has length $m' = k/h >
m$, and $V'=(v',\dots,v')$, of length $n$ and $v'=k/n$.

\item[Step 2:] run Rec$(H',V',{\mathcal E})$, and let $A$ be its
output matrix.

\item[Step 3:] act on the submatrix $M_0(u)$ of $A$, as defined in
Proposition \ref{bar}, by deleting the rows $s^{i\:h}(u)$, with $0
\leq i < t$, and $t=(n(v' - v)+n_1)/ h$. Give the obtained matrix
$A_1$ as output, after rearranging the columns in order to obtain
the desired sequence of vertical projections.
\end{description}

Again a simple example will clarify the algorithm: we consider the
instance $H=(3,\dots,3)$ of length $13$, and
$V=(5,5,5,4,4,4,4,4,4)$. Step $1$ sets $n_0=3$, $n_1=6$, and
$k=45$ is the least integer multiple of $h=3$, $n=9$, and greater
than $3\cdot 13= 39$.

\begin{figure}
\begin{center}
\includegraphics[width = 7.5cm]{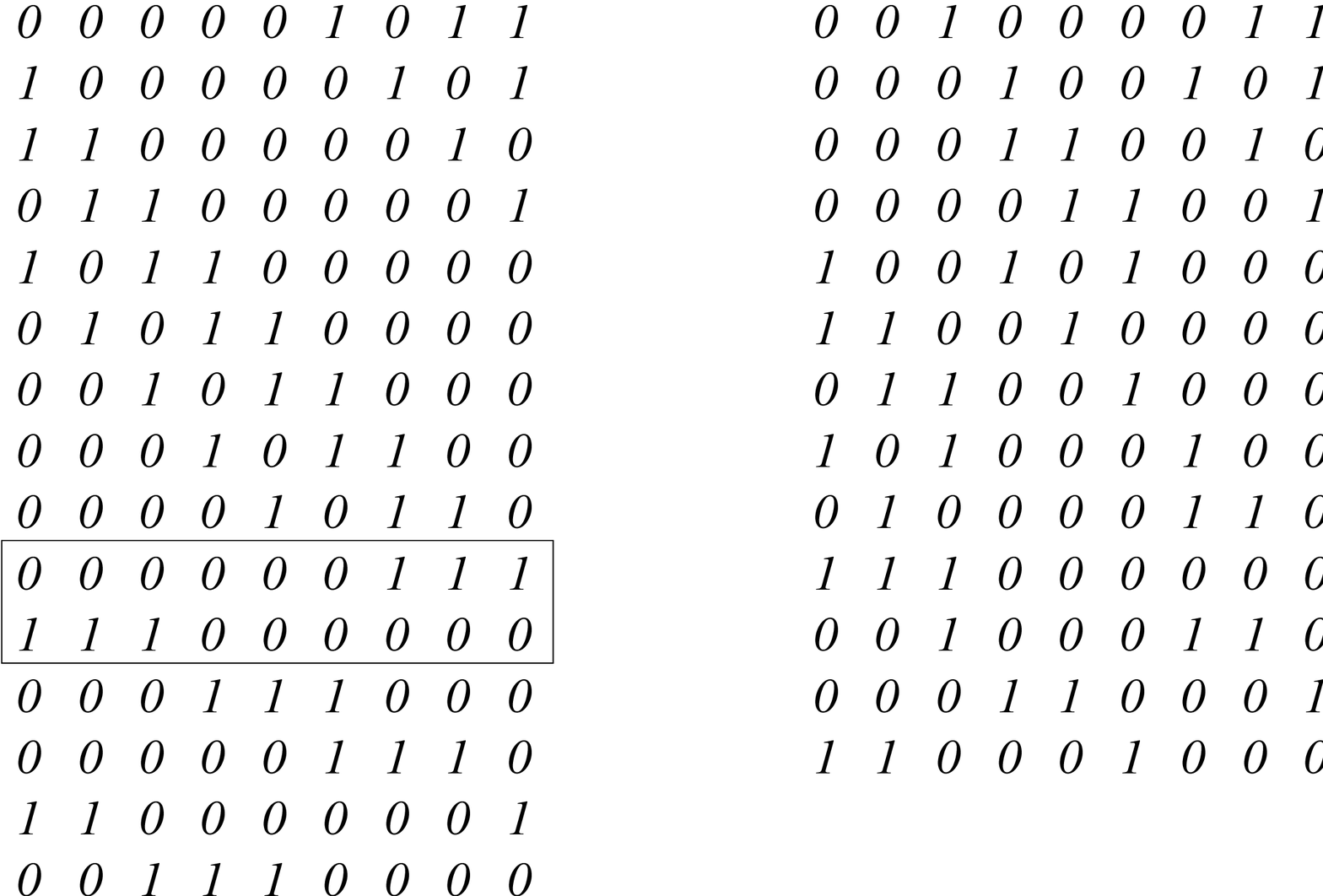}
\end{center}
\caption{On the left, two rows of $M_0(u)$ are deleted from the
matrix given on the right side of Fig.~\ref{matrix3}, which is the output of
$Rec(H',V',\mathcal{E})$. On the right, a rearrangement of its
columns makes it compatible with the initial sequence $V$.}
\label{matrix4}
\end{figure}

Step $2$ runs $Rec(H',V',\mathcal{E})$, on the homogeneous
instance $H'=(3,\dots,3)$ of length $15$, and $V'=(5,\dots,5)$ of
length $9$. The output matrix $A$ is given by Fig.~\ref{matrix3},
on the right.

Then, Step $3$ deletes the rows $s^0(u)=u=(0)^6(1)^3$, and
$s^3(u)=(1)^3(0)^6$ of the submatrix $M_0(u)$, being
$t=(9\cdot(5-5)+6)/3=2$, as in Fig.~\ref{matrix4}, on the left.

Finally a rearrangement of the columns is needed in order to make
the matrix compatible with the starting vector $V$, as in
Fig.~\ref{matrix4}, on the right. More precisely
columns $4$, $5$, and $6$ are shifted in the first three
positions, preserving their order.

{\bf Remark:} a rearrangement of the columns of a matrix causes a
related rearrangement of the elements of the vector of the
vertical projections, without modifying the values of its
elements. Furthermore, it is straightforward that such a
rearrangement also preserves the inequality relation between the rows.

\medskip

The correctness of RecSpan1$(H,V,{\mathcal E}_1)$ follows after
observing that:

\begin{description}

\item{$i)$} by definition of $k$, it holds: $$k/n - v <
h/gcd\{n,h\}.$$ In words, this means that the reconstructed matrix
$A$ compatible with the homogeneous vectors $H'$ and $V'$ is a
minimal one, w.r.t. the dimensions, including $A_1$. Furthermore,
the difference between the number of rows of $A_1$ and $A$ is less
than the rows of $M_0(u)$;

\item{$ii)$} in Step $2$, the vectors $H'$ and $V'$ satisfy
Condition $3$ by definition of $k$, and since $H$ and $V$ do. As a
consequence the call of Rec$(H',V',{\mathcal E})$ always
reconstructs a matrix $A$;


\item{$iii)$} it is straightforward that the algorithm
Rec$(H',V',{\mathcal E})$ always inserts the submatrix $M_0(u)$,
with $u=(0)^{n-h}(1)^h$, in $A$. So, the deletion of the rows
$s(u)^{i\:h}$, according to the consecutive values of $i$ ranging
from $1$ to $t$, forces the vertical projections to maintain two
different consecutive values at each time, and reaching the
desired values.
\end{description}

It is straightforward that the complexity of RecSpan1$(H,V,{\mathcal
E}_1)$ is the same as  Rec$(H,V,{\mathcal E})$.

\section{Conclusion}
The question of necessary and sufficient conditions for the
existence of a simple hypergraph ${\cal H}=(Vert,{\cal E}), \vert
Vert\vert = n,\vert{\cal E}\vert=m,$ with a given degree sequence
is a long outstanding open question even in the case of a
$3$-uniform hypergraph ($\vert e\vert=3$ for each $e\in \cal E$).
In this paper, we answered to this question in the special case
where $\cal H$ is $h$-uniform and $d$-regular or $\cal H$ is
$h$-uniform and almost $d$-regular, i.e. the degree sequence of
$Vert$ is $(d_1=v,d_2=v,\ldots,d_n=v)$,
$(d_1=v,\ldots,d_{n_0}=v,d_{n_0+1}=v-1,\ldots,d_{n_0+n_1}=v-1)$,
respectively. Merging the results of  the three previous sections
we can state the following:

\begin{theorem}\label{charact}
${\cal H}=(Vert,{\cal E}), \vert Vert\vert = n,\vert{\cal
E}\vert=m,$ is a $h$-uniform $d$-regular, respectively $h$-uniform
almost $d$-regular, hypergraph if and only if
\begin{enumerate}
\item $mh=nv$, resp. $mh=nv-n_1$;
\item $h\le n,v\le m$;
\item $v\leq h/n \cdot{{n}\choose{h}}$.
\end{enumerate}

\end{theorem}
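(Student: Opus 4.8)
The plan is to prove Theorem~\ref{charact} by assembling the machinery developed in the three preceding sections, treating the $d$-regular case and the almost $d$-regular case in parallel. First I would dispatch the necessity direction, which is essentially immediate: if $\mathcal H$ is $h$-uniform $d$-regular with $|Vert|=n$ and $|\mathcal E|=m$, then its incidence matrix $A$ lies in $\mathcal E$, so double-counting the $1$-entries gives $mh=nv$ (condition~1), the trivial bound on projections gives $h\le n$ and $v\le m$ (condition~2), and Condition~3 is exactly the statement proved earlier that a matrix with more than $\binom{n}{h}$ distinct rows of length $n$ cannot exist, forcing $v\le \frac{h}{n}\binom{n}{h}$. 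In the almost $d$-regular case the same argument applies verbatim except that the $1$-count becomes $mh = n_0 v + n_1(v-1) = nv - n_1$, since $n = n_0+n_1$.

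For the sufficiency direction, the idea is that conditions~1--3 are precisely Conditions~1, 2, and 3 in the sense used for $Consistency(H,V,\mathcal E)$ (respectively $\mathcal E_1$), so the constructive algorithms already built finish the job. In the $d$-regular case, given $n,m,h,v$ satisfying the three conditions, the vectors $H=(h,\dots,h)$ of length $m$ and $V=(v,\dots,v)$ of length $n$ form a valid instance of $Consistency(H,V,\mathcal E)$; by Theorem~\ref{lyndon} and the correctness of $Rec(H,V,\mathcal E)$ established in the previous section, the algorithm returns a matrix in $\mathcal E$ with these projections, and this matrix, read as an incidence matrix, is exactly an $h$-uniform $d$-regular hypergraph on $n$ vertices with $m$ hyperedges. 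In the almost $d$-regular case, one checks that $H=(h,\dots,h)$ of length $m$ together with $V=(v,\dots,v,v-1,\dots,v-1)$ satisfy Conditions~1, 2, 3 in the $\mathcal E_1$-formulation (here condition~1 reads $mh=nv-n_1$, matching the hypothesis), so by the correctness of $RecSpan1(H,V,\mathcal E_1)$ — in particular observations $(i)$--$(iii)$ on the size of the auxiliary homogeneous instance and on the deletion of rows from $M_0(u)$ — the algorithm returns a matrix in $\mathcal E_1$ realizing $H$ and $V$, which is the desired hypergraph.

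The one point that requires genuine care, rather than a one-line citation, is verifying that the auxiliary homogeneous instance $(H',V')$ constructed in Step~1 of $RecSpan1$ actually inherits Condition~3 from $(H,V)$: we need $v' = k/n \le \frac{h}{n}\binom{n}{h}$, and since $k$ is chosen as the least common multiple-type quantity just above $hm$, this amounts to checking that bumping $v$ up to $v'$ does not overshoot the binomial bound — which is exactly the content of observation $(i)$, namely $k/n - v < h/\gcd\{n,h\}$, combined with the equation $\binom{n}{h} = \sum_i \frac{n}{d_i} L(n/d_i, h/d_i)$ guaranteeing enough room. So the main obstacle in writing the proof cleanly is making sure the bookkeeping between the original span-one instance and its homogeneous relaxation is tight; everything else is a matter of quoting Theorem~\ref{lyndon} and the stated correctness of the two reconstruction procedures. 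I would therefore organize the write-up as: (1) necessity for both cases via counting and Condition~3; (2) sufficiency in the regular case via $Rec$; (3) sufficiency in the span-one case via $RecSpan1$, with the Condition-3 inheritance spelled out.
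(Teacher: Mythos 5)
Your proposal is correct and follows essentially the same route as the paper, which proves Theorem~\ref{charact} simply by merging the necessity counting with Theorem~\ref{lyndon} and the correctness of $Rec(H,V,\mathcal{E})$ and $RecSpan1(H,V,\mathcal{E}_1)$, including the Condition-3 inheritance for the auxiliary homogeneous instance that the paper disposes of in its observation $(ii)$. One small point: in the almost $d$-regular case Condition~3 does not follow ``verbatim'' from $m\le\binom{n}{h}$ and $mh=nv-n_1$ (that only gives $v\le \frac{h}{n}\binom{n}{h}+\frac{n_1}{n}$); you must either add that $\frac{h}{n}\binom{n}{h}=\binom{n-1}{h-1}$ is an integer and $n_1<n$, or argue directly that each column sum is at most $\binom{n-1}{h-1}$ because the distinct rows containing a $1$ in that column are among the $\binom{n-1}{h-1}$ weight-$h$ words with a $1$ there.
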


Moreover, given a degree sequence satisfying the conditions of
this theorem, we give two linear time (in the size of the
incidence matrix) algorithms that construct a $h$-uniform
$d$-regular hypergraph or a $h$-uniform almost $d$-regular
hypergraph.

A next step to the characterization of the degree sequence of a
simple hypergraph would be its study for  the subclass of uniform
hypergraphs (in particular three uniform hypergraphs) with span $k$, i.e. the degree of any vertex ranges
from $\{v-k,v-k+1,\ldots,v\}$ a set of $k$ successive values,
where $k\geq 2$ is a fixed integer.


\begin{thebibliography}{20}
\bibitem{balogh}
E. Balogh, E. Sorantin, L.G. Ny\'ul, K. Pal\'agyi, A. Kuba, G.
Werkgartner, E. Spuller, {\em Virtual dissection of the colon:
technique and first experiments with artificial and cadaveric
phantoms} Medical Imaging 2002: Image Processing, San Diego, USA,
Proceedings of SPIE Vol. 4681, 713-721 (2002).


\bibitem{batenburg}
K. J. Batenburg, S. Bals, J. Sijbers, C. Kuebel, P. A. Midgley, J.C.
Hernandez, U. Kaiser, E.R. Encina, E.A. Coronado and G. Van
Tendeloo, \textit{3D imaging of nanomaterials by discrete
tomography}, Ultramicroscopy, 109(6), 730-740 (2009).

\bibitem{pic}
C. Bentz, M.-C. Costa, C. Picouleau, B. Ries, D. de Werra,
\textit{Degree-constrained edge partitioning in graphs arising
from discrete tomography}, Journal of Graph Algorithms and
Applications, Vol. 13, no. 2, 99-118 (2009).

\bibitem{Berge}C. Berge, \textit{Graphes}, Gauthier-Villars, (Paris, 1983).

\bibitem{Bergehyper}C. Berge, \textit{Hypergraphs}, North Holland, 1989.

\bibitem{fixrows}
S. Brocchi, A. Frosini, C. Picouleau, \textit{Reconstruction of
binary matrices under fixed size neighborhood constraints},
Theoretical Computer Science, Vol. 406, 1-2, 43-54 (2008).

\bibitem{bro2col}
S.Brocchi, A. Frosini, S. Rinaldi, \textit{A reconstruction
algorithm for a subclass of instances of the 2-color problem},
Theoretical Computer Science, Vol. 412, 4795-4804 (2011).


\bibitem{bro}
S. Brocchi, A. Frosini, S. Rinaldi, \textit{The 1-Color Problem
and the Brylawski Model} Discrete Geometry for Computer Imagery,
15th IAPR International Conference, Proceedings, Lecture Notes in
Computer Science, Vol. 5810, 530-538 (2009).



\bibitem{kocay}
W. Kocay, P. C. Li , \textit{On 3-Hypergraphs with Equal Degree Sequences},
Ars Combinatoria 82, 145-157 (2006).


\bibitem{3colnp}
C. D\"{ u}rr, F. Gui{\~ n}ez, M. Matamala, \textit{Reconstructing
3-colored grids from horizontal and vertical projections is
NP-hard: A Solution to the 2-Atom Problem in Discrete Tomography},
SIAM J. Discrete Math. 26(1), 330-352 (2012).

\bibitem{gale}
D. Gale, \textit{A theorem on flows in networks}, Pacific J. Math.
7, 1073-1082 (1957).


\bibitem{gardner}
R. J. Gardner and P. Gritzmann, \textit{Discrete tomography:
determination of finite sets by X-rays}, Trans. Amer. Math. Soc.
349, 2271-2295 (1997).


\bibitem{gil}
E. N. Gilbert, J. Riordan, \textit{Symmetry types of periodic
sequences}, Illinois J. Math. 5, 657-665 (1961).

\bibitem{guinrz}
F. Gui\~nez, M. Matamala, S. Thomass\'e, \textit{Realizing disjoint
degree sequences of span at most two: A tractable discrete
tomography problem}, Discrete Applied Mathematics 159(1) 23-30
(2011).


\bibitem{Herman}
G. T. Herman and A. Kuba, \textit{Discrete tomography: Foundations
algorithms and applications}, Birkhauser, Boston (1999).

\bibitem{Herman2}
G. T. Herman and A. Kuba, \textit{Advances in Discrete Tomography
and Its Applications}, Birkhauser, Boston (2007).

\bibitem{irving}
R. W. Irving and M. R. Jerrum, \textit{Three-dimensional
statistical data security problems}, SIAM Journal of Computing,
23, 170-184 (1994).

\bibitem{vardi}
S. Matej, A. Vardi, G. T. Hermann and E. Vardi, \textit{Binary
tomography using Gibbs priors, in discrete tomography:
foundations, algorithms and applications}, G.T. Herman and A. Kuba
(eds.), Birkhauser, Boston, MA, USA, 191-212 (1999).

\bibitem{prause}
G. P. M. Prause and D. G. W. Onnasch, \textit{Binary
reconstruction of the heart chambers from biplane angiographic
image sequence}, IEEE Transactions Medical Imaging, 15 532-559
(1996).

\bibitem{ryser}H. J. Ryser, \textit{Combinatorial Mathematics}, Mathematical Association
of America and Quinn \& Boden, Rahway, New Jersey, 1963.

\bibitem{rys}
H. J. Ryser, \textit{Combinatorial properties of matrices of zeros
and ones}, Canadian Journal of Mathematics, Vol. 9, 371-377 (1957).

\bibitem{rus} 
F. Ruskey, J. Sawada \textit{An efficient algorithm for generating
necklaces with fixed density}, Siam J. Comput. 29, 671-684 (1999).

\bibitem{sawada} 
J. Sawada, \textit{ A fast algorithm to generate necklaces with
fixed content}, Theoret. Comput. Sci. 301, 477-489 (2003).

\bibitem{shlif}
A. R. Shliferstein and Y. T. Chien, \textit{Switching components
and the ambiguity problem in the reconstruction of pictures from
their projections}, Pattern Recognition, 10, 327-340 (1978).
\end{thebibliography}
\end{document}